\DeclareMathOperator*{\argmin}{argmin}
\DeclareMathOperator{\range}{range}
\newcommand{\intr}{\text{int}}
\newcommand{\extr}{\text{ext}}
\newcommand{\tend}{t_{\text{end}}}
\newcommand{\tprev}{\lfloor t \rfloor}
\newcommand{\tnext}{\lceil t \rceil}
\newcommand{\tsync}{t_{\text{sync}}}
\newcommand{\pq}{\mathcal{Q}}
\newcommand{\update}{\mathcal{U}}
\newcommand{\pushflux}{\mathcal{PF}}
\newcommand{\nsbmsh}{n_{\text{sbmsh}}}
\newcommand{\SobsNoCFL}{0.96}
\newcommand{\maxSpeedupErr}{6.0}
\newcommand{\polynomialImb}{1.02}
\newcommand{\polynomialImbwRb}{1.13}
\newcommand{\imbCGdtRb}{0.02}
\newcommand{\minSObsOverWork}{59}
\newcommand{\maxSObsOverWork}{77}
\definecolor{emerald}{rgb}{0.3125, 0.78125, 0.46875}
\definecolor{tuscany}{rgb}{0.988235, 0.81961, 0.1922}
\definecolor{cambridge-blue}{rgb}{0.637, 0.754, 0.676}
\title{Adaptive Total Variation Stable Local Timestepping for Conservation Laws}
\author{Maximilian Bremer, John Bachan, Cy Chan, Clint Dawson}
\begin{document}
\maketitle

\begin{abstract}
    This paper proposes a first-order total variation diminishing (TVD) treatment for coarsening and refining of local timestep size in response to dynamic local variations in wave speeds for nonlinear conservation laws. The algorithm is accompanied with a proof of formal correctness showing that given a sufficiently small minimum timestep the algorithm will produce TVD solution for nonlinear scalar conservation laws.
A key feature of the algorithm is its formulation as a discrete event simulation, which allows for easy and efficient parallelization using existing software.
Numerical results demonstrate the stability and adaptivity of the method for the shallow water equations.
We also introduce a performance model to load balance and explain the observed performance gains.
Performance results are presented for a single node on Stampede2’s Skylake partition using an optimistic parallel discrete event simulator.
Results show the proposed algorithm recovering 59\%-77\% of the theoretically achievable speed-up with the discrepancies being attributed to the cost of computing the CFL condition and load imbalance.
\end{abstract}

\section{Introduction}
\label{sec:intro}

The total variation stability results for scalar conservation laws form the basis of the robustness which has led to the popularity of finite volume and discontinuous Galerkin finite element methods. Roughly, the Courant-Friedrichs-Lax (CFL) condition stipulates that under a restriction on the timestep as a function of wave speed $|\Lambda|$ and mesh size $\Delta x$,
\begin{equation}
\Delta t \le \frac{C_{CFL} \Delta x}{|\Lambda|},
\label{eq:vanilla-cfl}
\end{equation}
the numerical solution will weakly converge to a weak solution~\cite{LeVeque1992}. However, for large scale simulations, it is computationally too expensive to check that~\eqref{eq:vanilla-cfl} is enforced due to high communication overhead. In practice, timesteps are set to sufficiently small values.

Enforcing a global CFL condition or using a sufficiently small global timestep tends to be overly conservative for large portions of the mesh. For example, in the case of hurricane storm surge simulations, meshes are used with length scales ranging from $\mathcal{O}(10\,\mathrm{m})$ to $\mathcal{O}(1\,\mathrm{km})$~\cite{Dawson2011}. Local timestepping relaxes the global CFL constraint~\eqref{eq:vanilla-cfl} by allowing different regions of the mesh to advance with different local timesteps. Similar to synchronous timestepping, existing methods are unable to account for significant temporal variations in the wave speed $|\Lambda|$.

\begin{figure}
\centering
  \begin{tikzpicture}
    \draw[<->,thick] (-4.8,0)--(4.8,0);
    \draw (-4.4,1)-- (-0.944,1) node[midway,above] {$u_0=1$};
    \draw (-0.944,1)--(-0.944,0);
    \draw[->,thick] (-1.294,0.5)--(-0.594,0.5);
    \draw[|-|] (1,0)--(1.6,0) node[midway,below] {$j$};
  \end{tikzpicture}
  \caption{Riemann problem for Burgers' equation with initial conditions, Equation~\eqref{eq:shock-ics-mot}.}
  \label{fig:shock-motivation}
\end{figure}
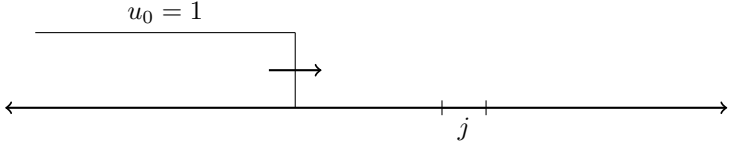

As a motivating example, consider (inviscid) Burgers' equation ($f(u) = u^2/2$), with initial conditions,
\begin{equation}
u_0(x) = \begin{cases}
1 & u < 0,\\
0 & u > 0.
\end{cases}
\label{eq:shock-ics-mot}
\end{equation}
The solution to this problem is a shockwave moving with speed $1/2$ to the right, i.e. $u(x,t) = u_0(x - t/2)$. The local wavespeed is given by $|\Lambda|(x) = |u|(x,t)$. For clarity, we have depicted the schematic in Figure~\ref{fig:shock-motivation}. Consider the cell $j$ downstream of the shock. Were one to naively evaluate the CFL condition~\eqref{eq:vanilla-cfl}, they would incorrectly determine that cell $j$ is able to step arbitrarily far. In doing so, as the shock arrives at cell $j$, it would be unable to pass through the cell, and mass would accumulate at the interface. Such a scheme is unable to converge.

Current theoretical results require knowing the entire temporal history of the flux, before being able to determine that a timestep will be stable. Therefore, one cannot look only at neighboring values, but must consider all flux values between the last update and the next update. While existing works can assess whether or not a timestep will be stable, they fail to describe an algorithm that will advance the system in a manner that guarantees stability. This work addresses this issue.


The main result of this paper is a novel adaptive local timestepping algorithm, which is provably total variation stable. The algorithm recasts local timestepping as a discrete event simulation, which allows cells to dynamically coarsen and refine their timesteps. A proof of correctness is supplied to verify that under a sufficiently small minimum timestep, the algorithm will stably advance the system to an arbitrary final time, $\tend$.  We also introduce a performance model for the adaptive, locally time-stepped method, which is used to both load balance the execution and explain the observed performance gains relative to theoretical speedup bounds.   While the presented algorithm is first order in time, this work makes two fundamental contributions, which will enable local timestepping to become feasible for nonlinear hyperbolic problems in a massively parallel computing environment:
\begin{enumerate}
    \item {\em The application of loop invariants in the proof of correctness}: Loop invariants are a proof technique that guarantee a program is formally correct, and have been used with great success in the linear algebra community~\cite{Bientinesi2011}. Given the highly asynchronous context in which our timestepping method is executed, it is extremely difficult to debug such a program. By using these formal correctness techniques, we have machinery which ensures that the algorithm achieves desired numerical properties. This not only strengthens the confidence in the results produced by the algorithm, but provides a systematic way to manage the complexity associated with higher order timestepping methods. We expect this proof technique to be indispensable in the extension of this timestepping scheme to higher orders and multiple dimensions.
    \item {\em A discrete event formulation which removes artificial synchronizations}: Many current timestepping formulations consider only two timestepping groups. Extending these formulations to more timestepping groups is done in a recursive fashion. However, parallelizing these methods then requires a synchronization between each fine timestep to allow for timestepping adapativity. 
    Amdahl's law severely restricts the scalability of any such formulation. By using a discrete event simulation, we consider arbitrarily many timestepping groups. Furthermore, we can leverage the extensive work done for parallel discrete event simulation to arrive at an efficient parallel implementation. Parallel performance is demonstrated for a single node on Stampede2's Skylake architecture using Devastator, one such parallel discrete event simulator~\cite{Chan2018}.
\end{enumerate}

The remainder of this paper is structured as follows. In Section \ref{sec:prev}, we discuss the current state of the art. Section \ref{sec:method} generalizes existing local timestepping results to account for arbitrary local timestepping. Section \ref{sec:alg} presents the timestepping algorithm, which we accompany with a proof of correctness. Section~\ref{sec:implementation} introduces the Devastator runtime and performance optimizations for the local timestepping algorithm. Finally, one-dimensional numerical results for the shallow water equations are presented in Section~\ref{sec:results}.

\section{Previous Work}
\label{sec:prev}

Relaxing the CFL condition for conservation laws by allowing different regions of the mesh to advance with different timesteps has been the subject of numerous studies. Osher et al. presented a first-order total variation diminishing timestepping scheme in~\cite{Osher1983}. Dawson and Kirby developed a second order method, which reduces to first order at the interface between elements stepping with different timesteps~\cite{Dawson2000}. Kirby generalized this approach to high resolution methods in~\cite{Kirby2003}. Constantinescu developed a second order total variation stable method for conservation laws~\cite{Constantinescu2007}. Mass conservative but not provably total variation stable third order methods were presented in the work of Schlegel et al~\cite{Schlegel2009}.

Other methods have relied on high-order interpolation strategies to approximate coupled terms with larger timesteps~\cite{Krivodonova2010, Gupta2016, Hoang2019}. Another set of local timestepping algorithms for conservation laws rely on single step methods with high-order space-time representations~\cite{Loercher2007,Dumbser2007,Taube2009}. These methods are conservative, but not provably total variation stable.

The main thrust of this paper is a reformulation of the algorithm of Osher et al~\cite{Osher1983} to improve parallel performance. Previous work parallelizing local timestepping approaches has had mixed results. For linear conservation laws, task-based programming models have been extremely effective, with local timestepping ADER-DG methods scaling up to full system runs~\cite{Breuer2016,Uphoff2017}. 
Successful task-based implementations for linear conservation laws fundamentally rely on timestepping groups being statically determined and thus are unsuitable for timestepping adaptivity.
Ignoring dynamic changes in timestepping groups results in CFL violations and associated instabilities, e.g.~\cite{Trahan2012,Gnedin2018}. 
Other parallelization attempts for conservation laws have been made by recursively updating finer and finer timestepping groups. These methods would allow for adaptive timestepping by synchronizing after each timestep to allow for cells to change their timestepping groups. However, load balancing these methods requires balancing the work in each timestepping group across all ranks~\cite{Rietmann2015,Seny2014}. The scalability of these approaches is limited by the amount of work in the timestepping group with the smallest timestep.

Other parallelization approaches have treated the adaptive local timestepping problem as a discrete event simulation~\cite{Omelchenko2006, Unfer2007, Omelchenko2012, Omelchenko2014,Stone2017,Shao2019}. While these approaches achieve tremendous speed-ups, these methods typically provide only heuristics and examples as proofs of robustness. To the authors' knowledge, no method has presented a provably total-variation stable adaptive timestepping scheme. 

\section{Theoretical Results for Scalar Conservation Laws}
\label{sec:method}
In this paper, we are concerned with solving problems of the following form: Find $u \in L^{\infty}((0,\tend); \mathbb{T})$ such that
\begin{equation}
\begin{cases}
\partial_t u + \partial_x f(u) = 0&\\
u(x,0) = u_0(x),&
\end{cases}
\label{eq:ivp}
\end{equation}
where $f$ is a Lipschitz flux, $u_0(x) \in L^{\infty}(\mathbb{T})\cap BV(\mathbb{T})$, and $\mathbb{T}$ is the unit torus.  Consider a finite partitioning of the torus, $\Omega^h = \cup_{j=0}^{n_{el}} (x_j,x_{j+1})$ as defined by the strictly increasing sequence $\{x_j\} \subset [0,1]$. Let $U_j$ denote the average value on the interval $(x_j, x_{j+1})$, and let $\Delta$ denote the forward difference operator, e.g. $\Delta U_k = U_{k+1} - U_k$.
We specify an approximation to the conservation law as a set of pairs,
\begin{equation*}
\mathcal{E} = \left\{ (t_j^n, U_j^n)\,:\, 0 \le j \le n_{el},\,n\in\mathbb{R}_+ \right\},
\end{equation*}
where $U_j^n$ is the average value over the cell $(x_j,x_{j+1})$ at time $t_j^n$. We refer to this collection of space-time points as the {\em event trace}. We define the timestamps $\mathcal{T}_j$ on cell $j$, as
\begin{equation*}
\mathcal{T}_j = \left\{ t_j^* \, : \, \exists\, U_j^* \, \text{s.t.}\, (t_j^*,U_j^*) \in \mathcal{E} \right\}.
\end{equation*}
We define for cell $j$, the nearest previous timestep for time $t$ as
\begin{equation*}
\lfloor t \rfloor_j = \max \{ \tau \in \mathcal{T}_j \, : \, \tau \le t \}.
\end{equation*}
We similarly define the next timestep at time $t$ as
\begin{equation*}
\lceil t \rceil_j = \min \{ \tau \in \mathcal{T}_j\, : \, \tau > t \}.
\end{equation*}
To achieve the desired theoretical results, we impose two constraints on the event traces. Firstly, we assume that there are only a finite number of events. Secondly, we need to restrict when timestamps are able to step relative to one another. 

We call the set of timestamps $\cup_j \mathcal{T}_j$ \emph{locally ordered} if for each cell $j$ there exists a sequence of pairwise synchronization times $\{s^{\mu}_{j,j+1}\}_{\mu=0}^{n_s} \subset \mathcal{T}_j \cap \mathcal{T}_{j+1}$ such that for all consecutive synchronization times $s^{\mu}_{j,j+1}$ and $s^{\mu+1}_{j,j+1}$,
\begin{equation*}
\big( \mathcal{T}_j \cup \mathcal{T}_{j+1} \big) \cap (s^{\mu}_{j,j+1}, s^{\mu+1}_{j,j+1}) = \emptyset.
\end{equation*}
That is to say that one cell must step strictly faster than its neighbor between  synchronization times. To illustrate this definition, we've drawn a sketch of potential local timestepping solutions in Figure \ref{fig:sol-cartoon}.
\begin{figure}
\centering
\subfloat[Straddling]{
\begin{tikzpicture}
    \draw[<->,thick] (-0.3,0)--(3.3,0) node[midway,below] {Space};
    \draw[<->,thick] (0,-0.3)--(0,3.3) node[midway,left] {Time};
    \draw[-] (0.75,0)--(0.75,3);
    \draw[-] (1.75,0)--(1.75,3);
    \draw[-] (2.75,0)--(2.75,3);
    \draw[-] (0.75, 0.45)--(1.75, 0.45);
    \draw[-] (0.75, 0.55)--(1.75, 0.55);
    \draw[-] (0.75, 1.45)--(1.75, 1.45);
    \draw[-] (0.75, 2.25)--(1.75, 2.25);
    \draw[-] (1.75, 1)--(2.75, 1);
    \draw[-] (1.75, 2.55)--(2.75, 2.55);
\end{tikzpicture}
}\hspace{1in}
\subfloat[Locally Ordered]{
\begin{tikzpicture}
    \draw[<->,thick] (-0.3,0)--(3.3,0) node[midway,below] {Space};
    \draw[<->,thick] (0,-0.3)--(0,3.3) node[midway,left] {Time};
    \draw[-] (0.75,0)--(0.75,3);
    \draw[-] (1.75,0)--(1.75,3);
    \draw[-] (2.75,0)--(2.75,3);
    \draw[-] (0.75, 0.25)--(1.75, 0.25);
    \draw[-] (0.75, 0.5)--(1.75, 0.5);
    \draw[-] (0.75, 2.25)--(1.75, 2.25);
    \draw[-] (1.75, 0.5)--(2.75, 0.5);
    \draw[-] (1.75, 1.55)--(2.75, 1.55);
    \draw[-] (1.75, 2)--(2.75, 2);
    \draw[-] (1.75, 2.25)--(2.75, 2.25);
\end{tikzpicture}
}
\caption{Comparison of two event traces.}
\label{fig:sol-cartoon}
\end{figure}
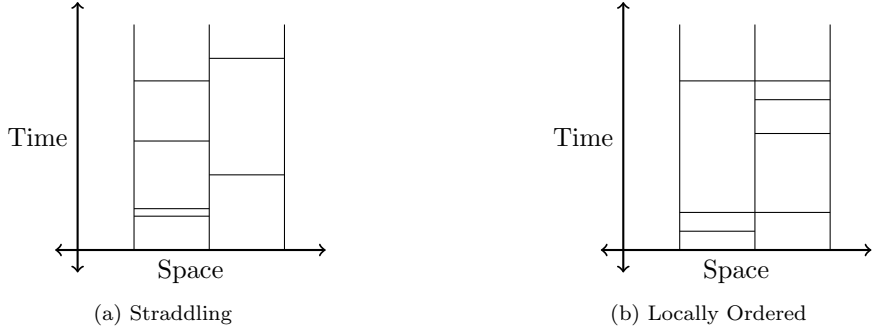
Additionally, we define the value of an approximation at any point in time and space as the most recent average value of the cell containing the given point, i.e. 
\begin{equation*}
U(t,x) = \begin{cases}
U_j^n & \text{if } x \in (x_j, x_{j+1}) \quad \text{and} \quad t = t_{j^n},\\
U(\tprev_j, x) & \text{where } x \in (x_j, x_{j+1}).
\end{cases}
\end{equation*}
We also introduce $U_j(t)$ as the value of the solution $U(t,x)$ inside cell $j$ at time $t$. To approximate the flux exchange at the boundary, we define a numerical flux $\hat{F}(\cdot, \cdot)$. Additionally, we require that the numerical flux be:
\begin{enumerate}
\item {\em Consistent}, i.e. $\hat{F}(u^*,u^*) = f(u^*)$,
\item {\em Monotone}, i.e. $\hat{F}$ is increasing in the first argument, and decreasing in the second argument,
\item {\em Lipschitz continuous} in both arguments.
\end{enumerate}
We now define the Euler approximation to the scalar conservation law as follows.
\begin{definition}[Forward Euler Approximation]
An event trace $\mathcal{E}$ is an Euler approximation to the scalar conservation law if
\begin{equation}
U_j^{n+1} = U_{j}^n + \frac{1}{\Delta x_j}\int_{t_j^n}^{t_j^{n+1}} \big[ \hat{F}(U_{j-1}(\tau), U_j^n) - \hat{F}(U_j^n,U_{j+1}(\tau)) \big] \,\mathrm{d}\tau,
\label{eq:update}
\end{equation}
for all cells $j$, and between all updates $t_j^n$ and $t_j^{n+1}$, where $\hat{F}$ is a numerical flux.
\end{definition}

The remainder of this section assumes the existence of forward Euler approximations. Under this assumption, we present proofs that under CFL-like constraints, a maximum principle and total variation stability can be obtained. 
Following the spirit of the analysis proposed in~\cite{Harten1983}, define
\begin{equation*}
C_j = \frac{ \hat{F}(U_{j-1},U_{j+1}) - \hat{F}(U_{j},U_{j+1})}{\Delta x_j( U_j - U_{j-1})} \text{ and } D_j = \frac{  \hat{F}(U_{j-1},U_{j}) - \hat{F}(U_{j-1},U_{j+1})}{\Delta x_j (U_{j+1} - U_{j})}.
\end{equation*}
We remark that due to the Lipschitz continuity of the numerical flux both $C_j$ and $D_j$ are bounded, and due to the monotonicity of the numerical flux $C_j \le 0 \le D_j$ for all $U_{j-1},U_j,U_{j+1}\in \mathbb{R}$. We reformulate the update rule \eqref{eq:update} as
\begin{equation}
U_j^{n+1} = U_j^n + \int_{t_j^n}^{t^{n+1}_j} \big[ C_j(\tau) \Delta U_{j-1}(\tau) - D_j(\tau) \Delta U_j(\tau) \big] \,\mathrm{d} \tau.
\label{eq:update2}
\end{equation}
Note that this representation is slightly different than the analyses presented in~\cite{Harten1983,Osher1983,Kirby2003}. We do not multiply our $C_j$ and $D_j$ coefficients by $\Delta t$.

\begin{theorem}[Maximum Principle]
Consider an event trace $\mathcal{E}$ satisfying the forward Euler approximation~\eqref{eq:update}. If given any two events $(t_j^n,\,U_j^n)$ and $(t_j^{n+1},\,U_j^{n+1})$,
\begin{equation}
1 + \int_{t_j^n}^{t_j^{n+1}} \big[ C_j(\tau) - D_j(\tau) \big] \,\mathrm{d}\tau \ge 0,
\label{eq:cfl-max}
\end{equation}
then
\begin{equation*}
|U^n_j| \le \sup_j |U^0_j|.
\end{equation*}
\label{thm:max}
\end{theorem}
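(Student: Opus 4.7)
The plan is to rewrite the update~\eqref{eq:update2} under the CFL hypothesis~\eqref{eq:cfl-max} as a \emph{convex combination} of previously computed cell averages, and then apply strong induction over the (finitely many) events of $\mathcal{E}$ enumerated in non-decreasing order of timestamp.

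The first step is a direct algebraic manipulation. Substituting $\Delta U_{j-1}(\tau) = U_j^n - U_{j-1}(\tau)$ and $\Delta U_j(\tau) = U_{j+1}(\tau) - U_j^n$ into~\eqref{eq:update2} and pulling $U_j^n$ out of the integral (it is constant on $(t_j^n, t_j^{n+1})$), one obtains a representation of the form
\[
U_j^{n+1} \;=\; \alpha_j^n\, U_j^n \;+\; \int_{t_j^n}^{t_j^{n+1}} a(\tau)\, U_{j-1}(\tau)\, d\tau \;+\; \int_{t_j^n}^{t_j^{n+1}} b(\tau)\, U_{j+1}(\tau)\, d\tau,
\]
where the scalar weight
\[
\alpha_j^n \;=\; 1 + \int_{t_j^n}^{t_j^{n+1}} \bigl[C_j(\tau) - D_j(\tau)\bigr]\, d\tau
\]
is precisely the quantity that~\eqref{eq:cfl-max} forces to be non-negative, and the two kernels $a(\tau), b(\tau)$ are built from $-C_j(\tau)$ and $D_j(\tau)$ and are therefore non-negative by the stated sign convention $C_j \le 0 \le D_j$. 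A direct verification shows $\alpha_j^n + \int_{t_j^n}^{t_j^{n+1}} [a(\tau) + b(\tau)]\, d\tau = 1$, so $U_j^{n+1}$ lies in the convex hull of $\{U_j^n\} \cup \{U_{j-1}(\tau), U_{j+1}(\tau) : \tau \in [t_j^n, t_j^{n+1}]\}$.

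The second step is the induction itself. The base case $|U_j^0| \le \sup_k |U_k^0|$ is immediate. For the inductive step at an event $(t_j^{n+1}, U_j^{n+1})$, every value appearing on the right-hand side of the convex-combination representation is drawn from events in $\mathcal{E}$ with strictly smaller timestamp, with at most one exception: a neighbor event with timestamp exactly $t_j^{n+1}$, which by the locally ordered hypothesis must coincide with a pairwise synchronization time and is itself produced by the same convex-combination rule applied to events that strictly precede $t_j^{n+1}$. Breaking ties by cell index yields an acyclic dependency ordering in which the inductive hypothesis applies to every right-hand-side value; non-negativity of the weights together with the triangle inequality then propagates the uniform bound $\sup_k |U_k^0|$ to $|U_j^{n+1}|$.

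The main conceptual obstacle is handling neighbor events that share a timestamp with the event being analyzed; the locally ordered hypothesis is exactly what is needed to rule out cycles in the data-dependency graph at such synchronization instants. Once this is settled, the proof reduces to the elementary observation that a convex combination of values uniformly bounded by $M$ is itself bounded by $M$ in absolute value.
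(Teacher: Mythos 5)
Your core estimate is the paper's own: writing $U_j^{n+1}$ as a nonnegative-weight combination of $U_j^n$, $U_{j-1}(\tau)$, and $U_{j+1}(\tau)$ with weights summing to one, where $\alpha_j^n = 1+\int(C_j-D_j)\ge 0$ is exactly hypothesis~\eqref{eq:cfl-max} and the kernels $-C_j,\,D_j$ are nonnegative by monotonicity, is precisely the triangle-inequality bound $|U_j^{n+1}|\le U^*$ with $U^*=\sup_{\tau\in[t_j^n,t_j^{n+1})}\{|U_j^n|,|U_{j-1}(\tau)|,|U_{j+1}(\tau)|\}$ that the paper derives. The only real difference is the bookkeeping of the induction: the paper uses the standing finiteness assumption on $\mathcal{E}$ to extract a positive minimum inter-event gap $\varepsilon$ and shows the running maximum $\mathcal{U}(t)$ is non-increasing across time slices of width $\varepsilon/2$, whereas you induct directly over events in timestamp order. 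Both are fine. One genuine flaw, though: you invoke the \emph{locally ordered} hypothesis to handle a neighbor event occurring at exactly $t_j^{n+1}$, but Theorem~\ref{thm:max} does not assume local ordering --- that hypothesis enters only in Theorem~\ref{thm:tvd-stab} --- so as written you are using something you have not been given. Fortunately you do not need it: a neighbor value attained only at the single instant $\tau=t_j^{n+1}$ changes the integrand on a set of measure zero, so every value that actually contributes to $\int_{t_j^n}^{t_j^{n+1}}$ comes from an event with strictly smaller timestamp; this is exactly why the paper takes the supremum over the half-open interval $[t_j^n,t_j^{n+1})$. Relatedly, the claim that ``breaking ties by cell index yields an acyclic dependency ordering'' is unjustified and, taken literally, dubious (at a shared timestamp cell $j$ would depend on cell $j+1$ and vice versa); the measure-zero observation is the clean way to dispose of the issue.
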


\begin{proof}
Taking the absolute value of the update criterion~\eqref{eq:update2} and applying the triangle inequality,
\begin{align*}
\left| U^{n+1}_j \right| & \le
\left| U_j^n + \int_{t^n_j}^{t^{n+1}_j} \big[ C_j(\tau) U_j^n - D_j(\tau)  U_j^n \big] \mathrm{d} \tau \right| \\
& \quad + \left| \int_{t^n_j}^{t^{n+1}_j} C_j(\tau) U_{j-1}(\tau) \mathrm{d} \tau \right| + \left| \int_{t^n_j}^{t^{n+1}_j} D_j(\tau) U_{j+1}(\tau) \mathrm{d} \tau \right|.
\end{align*}
Using the CFL condition~\eqref{eq:cfl-max}, and $C_j(\tau) \le 0 \le D_j(\tau)$ for all $\tau$,
\begin{align*}
\left| U^{n+1}_j \right| & \le \left( 1+ \int_{t^n_j}^{t^{n+1}_j} C_j(\tau)  + D_j(\tau) \, \mathrm{d} \tau \right) |U_j^n|\\
& \quad - \int_{t^n_j}^{t^{n+1}_j} C_j(\tau) |U_{j-1}(\tau)| \,\mathrm{d}\tau + \int_{t^n_j}^{t^{n+1}_j} D_j(\tau) |U_{j+1}(\tau)| \,\mathrm{d}\tau.
\end{align*}
Let $U^* = \sup_{\tau \in [t_j^n,t_j^{n+1})} \{ |U_j^n|, |U_{j-1}(\tau)|, |U_{j+1}(\tau)| \}$, then \
\begin{align*}
|U^{n+1}_j| & \le \left( 1 +\int_{t_j^n}^{t_j^{n+1}} \big[ C_j(\tau) - D_j(\tau) \big]\mathrm{d} \tau \right) U^* \\
& - \left(  \int_{t_j^n}^{t_j^{n+1}} C_j(\tau) \mathrm{d} \tau \right) U^* + \left(  \int_{t_j^n}^{t_j^{n+1}} D_j(\tau) \mathrm{d} \tau \right) U^* \le U^*.
\end{align*}
Lastly, we define the minimum time between events as
\begin{equation*}
\varepsilon = \inf_{j,k,n} \left\{ t_j^{n} - t \, : \, t \in \mathcal{T}_k \land t < t_j^n \right\}.
\end{equation*}
Since we  assume a finite number of events, $\varepsilon>0$. Letting $\mathcal{U}(t)$ be defined as the largest value up till time $t$, i.e.
\begin{equation*}
\mathcal{U}(t) = \max_{j,\,\tau \le t} |U_j(\tau)|.
\end{equation*}
By the definition of $\varepsilon$, $U^* \le \mathcal{U}(t^{n+1}_j - \varepsilon/2)$.
Finally, we claim $\mathcal{U}((n+1)\varepsilon/2) \le \mathcal{U}(n\varepsilon/2)$. Pick any event at $t_i^m$ such that $n\varepsilon/2 < t_i^m \le (n+1) \varepsilon/2$. By the above analysis, 
\begin{equation*}
|U_i^m| \le \mathcal{U}(t_m^i - \varepsilon/2) \le \mathcal{U}(n\varepsilon/2).
\end{equation*}
Taking the maximum over all events occurring between $n\varepsilon/2$ and $(n+1)\varepsilon/2$, $\mathcal{U}((n+1) \varepsilon/2) \le \mathcal{U}(n \varepsilon)$. Arguing inductively, for all events $(t_j^n,\,U_j^n)$, $|U_j^n| \le \mathcal{U}(0) = \sup_j |U_j^0|$.
\end{proof}


In addition to a maximum principle, we can similarly formulate a CFL condition to ensure that the event trace is total variation diminishing.
\begin{theorem}
\label{thm:tvd-stab}
A locally ordered forward Euler solution $\mathcal{E}$ subject to the following CFL constraint: for all cells $j$ and for all times between consecutive synchronization times, $t \in (s_{j,j+1}^{\mu}, s_{j,j+1}^{\mu+1})$,
\begin{equation}
1 + (\lceil t \rceil_{j+1} - s_{j,j+1}^{\mu})C_{j+1}(t) - (\lceil t \rceil_j - s_{j,j+1}^{\mu})D_{j}(t) \ge 0,
\label{eq:cfl-tvd}
\end{equation}
 is total variation diminishing (TVD), i.e.
\begin{equation*}
TV( U(t) ) = \sum_j |U_{j+1}(t) - U_j(t)| \le TV(U(0)).
\end{equation*}
\end{theorem}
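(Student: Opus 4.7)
The plan is to adapt Harten's classical TVD argument to the event-based, locally ordered setting. Since $U(t,x)$ is piecewise constant in $t$, the functional $\sum_j |\Delta U_j(t)|$ is also piecewise constant and changes only at events; because the event trace is finite, it suffices to show that between each consecutive pair of synchronization instants $s^{\mu}_{j,j+1}$ and $s^{\mu+1}_{j,j+1}$ on every edge, the global sum $\sum_j |\Delta U_j(\cdot)|$ is non-increasing.

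The first step is an edge-local evolution law. Fix an edge $(j,j+1)$ and an interval $[s^\mu,s^{\mu+1}] := [s^{\mu}_{j,j+1},s^{\mu+1}_{j,j+1}]$. Local ordering guarantees that exactly one of $\{j,j+1\}$ is fast on this interval while the other cell's average is frozen, so applying \eqref{eq:update2} at each event of the fast cell and Abel-summing the increments expresses the change $\Delta U_j(s^{\mu+1})-\Delta U_j(s^\mu)$ as a sum of integrals of $C_{j+1}\Delta U_j - D_{j+1}\Delta U_{j+1}$ and $C_j\Delta U_{j-1} - D_j\Delta U_j$ against the appropriate step sizes. Rewriting this in the Harten form gives, for some intermediate $t^\ast \in [s^\mu,s^{\mu+1}]$,
\begin{equation*}
\Delta U_j(s^{\mu+1}) = \alpha(t^\ast)\,\Delta U_j(t^\ast) + \beta_{-}(t^\ast)\,\Delta U_{j-1}(\cdot) + \beta_{+}(t^\ast)\,\Delta U_{j+1}(\cdot),
\end{equation*}
with self-coefficient
\begin{equation*}
\alpha(t^\ast) = 1 + (\lceil t^\ast \rceil_{j+1}-s^\mu)\,C_{j+1}(t^\ast) - (\lceil t^\ast \rceil_j-s^\mu)\,D_j(t^\ast).
\end{equation*}
The CFL condition \eqref{eq:cfl-tvd} is precisely what forces $\alpha(t^\ast)\ge 0$, while the monotonicity of $\hat F$ (and hence the sign properties of $C$ and $D$) gives $\beta_{\pm}\ge 0$. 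Taking absolute values and applying the triangle inequality yields an upper bound on $|\Delta U_j(s^{\mu+1})|$ by a non-negative linear combination of $|\Delta U_j|,|\Delta U_{j-1}|,|\Delta U_{j+1}|$ at earlier times.

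Summing this bound over $j$ and re-indexing the neighbor contributions, the coefficients telescope: each $|\Delta U_j|$ picks up total weight $\alpha + \beta_{+}(\text{from edge } j-1) + \beta_{-}(\text{from edge } j+1) = 1$ by construction of $C_j,D_j$, so that $\sum_j |\Delta U_j(s^{\mu+1})| \le \sum_j |\Delta U_j(t^\ast)|$. Iterating in $\mu$ (per edge) and over all edges then gives $TV(U(t))\le TV(U(0))$ for every event time $t$.

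The main obstacle is the asynchrony. The synchronization intervals $[s^{\mu}_{j,j+1},s^{\mu+1}_{j,j+1}]$ on neighboring edges are generally misaligned, so the intermediate time $t^\ast$ used on edge $(j,j+1)$ need not coincide with the one used on $(j-1,j)$ or $(j+1,j+2)$. This means that the per-edge Harten bounds must be summed across a bookkeeping of events walked in time order, with a carefully chosen global potential that is monotone across each event-block delimited by synchronization times. I expect this cross-edge alignment to be the most delicate step; the specific form of \eqref{eq:cfl-tvd}, which mixes $\lceil t\rceil_j$ and $\lceil t\rceil_{j+1}$ against the shared reference $s^{\mu}_{j,j+1}$, is tailored so that the coefficient identities close edge-by-edge regardless of how events interleave globally.
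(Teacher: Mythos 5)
Your overall strategy is the same as the paper's: use local ordering to reduce each edge $(j,j+1)$, between consecutive synchronization times, to a two-rate local timestepping step in the style of Harten/Kirby, and then sum the resulting per-edge estimates over $j$. The paper does exactly this, via Lemma~\ref{lem1} (whose proof it defers to Proposition~4.1 of \cite{Kirby2003}) followed by chaining over synchronization intervals and summing over cells.

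The genuine gap is precisely the step you flag as ``the most delicate'': the cross-edge summation. The synchronous Harten bookkeeping $\alpha + \beta_{+} + \beta_{-} = 1$ does not close in this setting, because on edge $(j,j+1)$ the coefficients are time integrals referenced to $s^{\mu}_{j,j+1}$ and multiply $|\Delta U|$ evaluated at that edge's substep times, while the contributions to $|\Delta U_j|$ arriving from edges $(j-1,j)$ and $(j+1,j+2)$ are referenced to different synchronization times and evaluated at different instants; there is no single $t^{\ast}$ at which the weights sum to one, and a ``global potential walked in event order'' is not how the paper resolves this. The resolution is to cast the per-edge estimate in the specific form of Lemma~\ref{lem1},
\begin{equation*}
|\Delta U_j(s^{\mu+1}_{j,j+1})| \le |\Delta U_j(s^{\mu}_{j,j+1})| + \Delta \int_{s^{\mu}_{j,j+1}}^{s^{\mu+1}_{j,j+1}} C_j|\Delta U_{j-1}|(\tau) + D_j|\Delta U_j|(\tau)\,\mathrm{d}\tau,
\end{equation*}
in which all neighbor contributions have been absorbed into an exact forward difference in $j$ of a time integral over the synchronization window. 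Chaining this over $\mu$ concatenates the integration windows so that every edge carries an estimate over the \emph{common} interval $[0,t]$, independent of how its synchronization times interleave with those of adjacent edges; summing over $j$ on the periodic torus then annihilates the $\Delta(\cdot)$ terms identically. Without deriving (or citing) a per-edge estimate of this particular algebraic form, your plan stalls exactly where you predict it will.
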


Since the solution is locally ordered, either cell $j$ or $j+1$ must substep relative to the other cell. Between consecutive synchronization times, the scheme reduces to a traditional two step local timestepping scheme, allowing us to leverage existing work.

\begin{lemma}
\label{lem1}
Given a locally ordered forward Euler solution $\mathcal{E}$, consider cells $j$ and $j+1$ and consecutive synchronization times $s_{j,j+1}^{\mu}$ and $s_{j,j+1}^{\mu+1}$. If for all times between the synchronization times, $t \in (s_{j,j+1}^{\mu}, s_{j,j+1}^{\mu+1})$, Equation~\eqref{eq:cfl-tvd} holds, then
\begin{equation*}
    |\Delta U_j(s_{j,j+1}^{\mu+1})| \le |\Delta U_j(s_{j,j+1}^{\mu})| + \Delta \int_{s_{j,j+1}^{\mu}}^{s_{j,j+1}^{\mu+1}} C_j |\Delta U_{j-1}|(\tau) + D_j|\Delta U_j|(\tau)\,\mathrm{d}\tau.
\end{equation*}
\end{lemma}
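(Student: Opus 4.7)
The plan is to reduce the asynchronous bound to a Harten-style argument by leveraging local ordering to collapse the picture at the interface $(j, j+1)$ to a two-level timestepping problem. By the locally ordered hypothesis, exactly one of cells $j$ and $j+1$ substeps strictly inside $(\ssync, \ssyncnext)$. By symmetry, I take cell $j$ to be the fine cell with substeps $\ssync = t_j^0 < t_j^1 < \cdots < t_j^K = \ssyncnext$, so that $U_{j+1}(\tau) = U_{j+1}(\ssync)$ throughout $[\ssync, \ssyncnext)$; the swapped case is analogous.

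Next, I would apply \eqref{eq:update2} at each substep of cell $j$, telescope the sum over $k = 0, \dots, K-1$, apply \eqref{eq:update2} once to cell $j+1$ over the full interval, and subtract. Algebraic rearrangement yields
\begin{equation*}
\Delta U_j(\ssyncnext) - \Delta U_j(\ssync) = \int_{\ssync}^{\ssyncnext}\!\bigl[(C_{j+1}-D_j)\Delta U_j + D_{j+1}\Delta U_{j+1} - C_j\Delta U_{j-1}\bigr]\,\mathrm{d}\tau.
\end{equation*}
The factor $\Delta U_j(\tau)$ is piecewise constant: on $[t_j^k, t_j^{k+1})$ it equals the substep jump $w^k := U_{j+1}(\ssync) - U_j(t_j^k)$, which satisfies the affine recursion $w^{k+1} = \bigl(1 - \int_{t_j^k}^{t_j^{k+1}} D_j\,\mathrm{d}\tau\bigr)w^k - \int_{t_j^k}^{t_j^{k+1}} C_j\Delta U_{j-1}\,\mathrm{d}\tau$. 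Integrating \eqref{eq:cfl-tvd} on each substep and using $t_j^{k+1} - \ssync \ge t_j^{k+1} - t_j^k$ together with $C_{j+1} \le 0$ forces $\int_{t_j^k}^{t_j^{k+1}} D_j\,\mathrm{d}\tau \le 1$, so the contraction factor is non-negative and the recursion can be carried through in absolute value.

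The main obstacle is recovering the non-positive sign of the $\int C_{j+1}|\Delta U_j|\,\mathrm{d}\tau$ term on the right-hand side: a naive triangle inequality that separately bounds $|w^K|$ (from the cell-$j$ recursion) and $|U_{j+1}(\ssyncnext) - U_{j+1}(\ssync)|$ (from cell $j+1$'s single update) gives instead an $\int |C_{j+1}|\,|\Delta U_j|\,\mathrm{d}\tau$ contribution, which is strictly looser. To close this gap I would couple the two cells' updates through a virtual cell-$j+1$ trajectory
\begin{equation*}
\tilde U_{j+1}^k := U_{j+1}(\ssync) + \int_{\ssync}^{t_j^k}\!\bigl[C_{j+1}\Delta U_j + D_{j+1}\Delta U_{j+1}\bigr]\,\mathrm{d}\tau,
\end{equation*}
and the associated virtual jump $\Psi^k := \tilde U_{j+1}^k - U_j^k$, so that $\Psi^0 = \Delta U_j(\ssync)$ and $\Psi^K = \Delta U_j(\ssyncnext)$. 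A forward induction in $k$, in which each incremental step $\Psi^{k+1} - \Psi^k$ is combined with $\Psi^k$ before passing to absolute values, should produce the signed coefficient $(C_{j+1}-D_j)|w^k|$ in the running bound rather than the loose $|C_{j+1}-D_j|\,|w^k|$; the substep-dependent growth of $\lceil t\rceil_j - \ssync$ in \eqref{eq:cfl-tvd} supplies precisely the per-substep CFL budget needed to close this induction.
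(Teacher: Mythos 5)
Your overall strategy is the one the paper itself relies on: the paper gives no proof of Lemma~\ref{lem1} beyond deferring to Proposition~4.1 of Kirby, and your reduction via local ordering to a two-rate interface problem, the telescoped identity for $\Delta U_j(\ssyncnext)-\Delta U_j(\ssync)$, the recursion for the frozen-coarse-value jumps $w^k$, and the deduction $\int_{t_j^k}^{t_j^{k+1}} D_j\,\mathrm{d}\tau\le 1$ from \eqref{eq:cfl-tvd} are all correct (you have also, appropriately, used the sign of \eqref{eq:update2} consistent with the flux decomposition $\frac{1}{\Delta x_j}[\hat F(U_{j-1},U_j)-\hat F(U_j,U_{j+1})]=C_j\Delta U_{j-1}+D_j\Delta U_j$ rather than the minus sign as printed). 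You have also correctly located the crux: recovering the \emph{signed} term $\int C_{j+1}|\Delta U_j|$ rather than $\int|C_{j+1}|\,|\Delta U_j|$.

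However, the step you offer to close this gap does not work as stated. Writing $c_k=\int_{t_j^k}^{t_j^{k+1}}C_{j+1}$, $d_k=\int_{t_j^k}^{t_j^{k+1}}D_j$, your induction needs, at each substep, the inequality $|\Psi^k+(c_k-d_k)w^k|\le|\Psi^k|+(c_k-d_k)|w^k|$ before the remainder $r_k=\int_{t_j^k}^{t_j^{k+1}}[D_{j+1}\Delta U_{j+1}-C_j\Delta U_{j-1}]$ is majorized. Since $c_k-d_k\le 0$, this asserts that subtracting $(d_k-c_k)w^k$ \emph{strictly decreases} $|\Psi^k|$ by $(d_k-c_k)|w^k|$, which requires $w^k$ and $\Psi^k$ to have the same sign and $(d_k-c_k)|w^k|\le|\Psi^k|$. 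Neither holds in general: $\Psi^k-w^k=\sum_{i<k}c_iw^i+\int_{\ssync}^{t_j^k}D_{j+1}\Delta U_{j+1}$, and the accumulated $D_{j+1}\Delta U_{j+1}$ contribution, or a left-neighbor kick $-\int C_j\Delta U_{j-1}$ that flips the sign of $w^k$ relative to $w^0$, can put $\Psi^k$ and $w^k$ on opposite sides of zero, in which case the per-step inequality fails (the lemma survives only because the slack already surrendered in bounding $|r_i|$, $i<k$, by $\int[D_{j+1}|\Delta U_{j+1}|+|C_j|\,|\Delta U_{j-1}|]$ covers the deficit). ``Combining before passing to absolute values'' is therefore not enough; the induction hypothesis must carry the unspent portion of that neighbor slack forward, and the coupled budget $\frac{t_j^{k+1}-\ssync}{t_j^{k+1}-t_j^k}d_k+\frac{\ssyncnext-\ssync}{t_j^{k+1}-t_j^k}(-c_k)\le 1$ implied by \eqref{eq:cfl-tvd} must be used jointly rather than only through $d_k\le1$. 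This bookkeeping is exactly the content of Kirby's Proposition~4.1, and it is the part of the argument your proposal asserts (``should produce,'' ``supplies precisely the budget'') without supplying.
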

Although the CFL condition \eqref{eq:cfl-tvd} is less stringent than the one presented in Kirby~\cite{Kirby2003}, the proof for Lemma~\ref{lem1} follows identically to the proof for Proposition 4.1 in~\cite{Kirby2003}.

The proof of Theorem \ref{thm:tvd-stab} then follows. Since the event trace is locally ordered, we apply Lemma~\ref{lem1} to each synchronization time of cells $j$ and $j+1$ to get
\begin{equation*}
    |\Delta U_j(t)| \le |\Delta U_j(0)| + \Delta \int_0^t C_j|\Delta U_{j-1}|(\tau) + D_j|\Delta U_j|(\tau) \,\mathrm{d} \tau.
\end{equation*}
Summing across all cells $j$ yields the result. 


\newsavebox{\submeshlisting}
\begin{lrbox}{\submeshlisting}
\begin{lstlisting}[mathescape=true,basicstyle=\scriptsize]
class Submesh
  $id \in \mathbb{Z}$
  $\tprev,\,\tnext\in \mathbb{R}$
  $u,\,\Delta x \in \mathbb{R}^{n_{el}}$
  $\ell,\,r \in$ Interface
\end{lstlisting}
\end{lrbox}

\newsavebox{\interfacelisting}
\begin{lrbox}{\interfacelisting}
\begin{lstlisting}[mathescape=true,basicstyle=\scriptsize]
class Interface
  $id \in \mathbb{Z}$
  $\tprev,\,\tsync \in \mathbb{R}$
  $u,\,\Sigma \hat{F},\,\Delta x \in \mathbb{R}$
  $K^{int},\,K^{ext} \in \mathbb{R}$
\end{lstlisting}
\end{lrbox}

\section{An Adaptive Local Timestepping Algorithm}
\label{sec:alg}
In this section, we present an algorithm that generates an event trace satisfying the hypotheses of Theorem \ref{thm:tvd-stab}. To allow for an efficient implementation, we couch the algorithm in the language of discrete event simulations. The main result of this section will prove that with a sufficiently small minimum timestep, our algorithm will generate a total variation stable event trace.

\subsection{Discrete Event Simulation}
Discrete event simulation as a tool has been used extensively for the simulation of numerous phenomena~\cite{Fujimoto1990}. Fundamentally, the simulation is represented as a set of actors, and a set of timestamped events, denoted $\pq$.  The discrete event simulator guarantees that events will be executed in order of their timestamps. When executed, an event may schedule more events at later timestamps.
At a high level, the actors used for the approximation of solutions to conservation laws will consist of submeshes. For this section, we require that at least two cells be assigned to each submesh. 
However, achieving performance requires balancing the overhead of the simulator against the amount of useful work being done during each task~\cite{Bremer2019}. In practice, submeshes should contain significantly more cells. Each submesh can schedule one of two events: (1) {\em Update} ($\mathcal{U}$) the submesh to the time at which the update is scheduled, and (2) {\em Push Flux} ($\mathcal{PF}$), wherein the submesh sends relevant metadata to the neighboring cell to allow advancing along the shared boundary. We quantize our time to be an integer multiple of a smallest timestep, $\Delta t_{\min}$. In this section, we use $t,\,\tprev,\,\tnext$ as variable names and $\tau$ is used to specify timestamps in the discrete event simulation.

\begin{figure}
\centering
\subfloat[Submesh class\label{class:submesh}]{\usebox{\submeshlisting}}\hspace{2in}
\subfloat[Interface class\label{class:interface}]{\usebox{\interfacelisting}}
\caption{Local timestepping data structures}
\end{figure}

The actor states are described in Figure~\ref{class:submesh}. The actor's members correspond to
\begin{itemize}
\item $id$: The submesh id,
\item $\tprev$: The last time at which the submesh was updated,
\item $\tnext$: The next time at which the submesh is scheduled to be updated,
\item $u$: The average density on each cell,
\item $\Delta x$: The sizes of the cells for the submesh,
\item $\ell,\,r$: Representations of the states to the left and right of the submesh.
\end{itemize}
The interface class in Figure~\ref{class:interface} describes neighboring cell metadata and has members describing: 
\begin{itemize}
\item $id$: The id of the neighboring submesh,
\item $\tprev$: The last time at which the neighbor was updated,
\item $\tsync$: The last time at which the neighbor and submesh both updated,
\item $u$: The neighbor's density at time $\tprev$,
\item $\Sigma \hat{F}$: The integral of the numerical flux between the submesh and its neighbor from the submesh's $\tprev$ to the interface's $\tprev$,
\item $\Delta x$: The size of the neighboring cell,
\item $K^*$: The largest Lipschitz constant of the numerical flux between time $\tsync$ to $\tprev$ on the internal interface ($K^{\intr}$) or external interface cell ($K^{\extr}$).
\end{itemize}
We note that the Lipschitz constant $K$ is used to bound $C_j$ and $D_j$ terms simultaneously. For commonly used fluxes like the Godunov flux or Lax-Friedrichs flux, $K$ is set to $|\Lambda|/\Delta x$ yielding the commonly seen version of the CFL condition \eqref{eq:vanilla-cfl}.
Member fields are denoted using a period, e.g. $S.\ell.\tprev$ denotes the previous update of the left interface of submesh $S$.
Before we define our two main events, {\sc update} and {\sc push\_flux}, we define some helper functions:
\begin{itemize}
\item {\sc advance(t, submesh)}: Advance the submesh one timestep from submesh.$\tprev$ to $t$ according to the update rule~\eqref{eq:update}. Note that this will also cause corresponding updates in boundary terms such as $K^*$ and $\Sigma \hat{F}$.
\item {\sc compute\_t\_next\_bdry(submesh, neigh)}: Compute the timestep for interfaces which depend on neigh.$u$. This evaluates two CFL conditions at: (1) the external interface between the submesh and its neighbor, which is synchronized at neigh.$\tsync$ and (2) the internal interface nearest to the external interface, which is synchronized at submesh.$\tprev$.
\item {\sc compute\_t\_next(submesh, t)}: Compute the timestep for the submesh. This is taken to be the minimum of the values returned by {\sc compute\_t\_next\_bdry} for both neighbors and the largest allowable internal timestep.
\item {\sc make\_msg(submesh, neigh)}: Generate a buffer with the required information to update the neighbor's corresponding interface.
\item {\sc accumulate(t, submesh, neigh)}: Update neigh.$\Sigma \hat{F}$ to integrate from the previous integration point neigh.$\tprev$ to time $t$. 
\item {\sc update\_K\_bdry(submesh, neigh)}: Update Lipschitz constants $K^*$ according to the new updated values at the boundary.
\end{itemize}
The helper functions serve as an API between the application and the timestepping algorithm. Features like which set of conservation laws or choice of discretization are encapsulated into the above function calls. In addition to the helper functions, we also define our scheduling primitives. These are the function calls with which events are scheduled in the discrete event simulator.
\begin{itemize}
\item {\sc schedule(t, event)}: Schedule {\sc event} at time {\sc t}.
\item {\sc schedule\_inline(event)}: Execute {\sc event} immediately. 
\end{itemize}

\begin{figure}
\begin{lstlisting}[mathescape=true, basicstyle=\scriptsize]
function update(id, update_forced)
  $\tau \gets$ get_time()
  submesh $\gets$ get_submesh(id)

  if ( $\tau \neq \text{submesh}.\tnext \land \neg \text{update\_forced}$ ) return
  if ( $\tau = \text{submesh}.\tprev$ ) return
    
  advance($\tau$, submesh)
  submesh.$\tnext$ $\gets$ compute_t_next(submesh, $\tau$)
  
  for neigh $\in$ submesh.bdry
    forced_update $\gets$ neigh.$\tprev$ > neigh.$\tsync$
                  $\land$ compute_t_next_bdry(submesh, neigh) $\le \tau$
                  
    schedule( $\tau$, push_flux( neigh.id, id,
                            make_msg(submesh, neigh),
                            forced_update) )
    if ( forced_update ) neigh.$\tsync$ $\gets\tau$
    
  if ( submesh.$\tnext > \tau$ )
    schedule(submesh.$\tprev$, update(id, false))
\end{lstlisting}
\caption{Update function}
\label{fig:update}
\end{figure}

\begin{figure}
\begin{lstlisting}[mathescape=true,basicstyle=\scriptsize]
function push_flux(id, id_from, msg, force_update)
  $\tau \gets$ get_time()
  submesh $\gets$ get_submesh(id)
  neigh $\gets$ get_neigh(submesh, id_from)
  
  accumulate( $\tau$, submesh, neigh )
  neigh.u $\gets$ msg
  neigh.$\tprev$ $\gets \tau$
  if ( neigh.$\tprev =$submesh.$\tprev$ ) neigh.$\tsync$ $\gets \tau$
  update_K_bdry(submesh, neigh)
  
  submesh.$\tnext$ $\gets$ compute_t_next(submesh, $\tau$)
  if ( force_update $\lor$ submesh.$\tnext \le \tau$ )
    schedule_inline(update(id, true))
    
  if ( submesh.$\tnext > \tau$ )
    schedule(submesh.$\tnext$, update(id, false))
\end{lstlisting}
\caption{Push flux function}
\label{fig:push-flux}
\end{figure}

With these helper functions, we now define our two main events: update ($\update$)--shown in Figure~\ref{fig:update}---and push flux ($\pushflux$)--shown in Figure~\ref{fig:push-flux}. The update at time $\tau$ advances the submesh from $\tprev$ to time $\tau$. Once the submesh has updated, it sends values at shared boundaries to its neighbors via a push flux. Due to CFL constraints or local ordering violations, the submesh may require the require the neighbor to update. 
When the neighbor processes the push flux, it integrates the flux to time $\tau$, and then updates the density $u$ and relevant metadata.
If either the CFL condition is no longer satisfied or the neighbor requires an update, the submesh updates inside the body of the push flux.
The simulation schedules initial updates. As each event executes, it schedules subsequent events until every submesh has arrived at the final time. 
The remainder of this section demonstrates that the generated event trace is total variation stable.


%

\begin{theorem}
\label{thm:des-tvd}
Given a minimum timestep size of
\begin{equation}
\Delta t_{\min} < \inf\left\{ \frac{\Delta x_{\min}}{2 \max(K_1(\xi),\,K_2(\xi))}\, : \, \xi \in \range(u_0) \right\},
\label{eq:dt-min}
\end{equation}
where $\Delta x_{\min}$ is the smallest element size, $K_1(\xi)$ is the local Lipschitz constant of $\hat{F}(\cdot,\xi)$ and $K_2(\xi)$ is the local Lipschitz constant of $\hat{F}(\xi,\cdot)$. The event trace generated by the updates and push fluxes with a minimum timestep of $\Delta t_{\min}$ will produce a TVD solution to the scalar conservation law in~\eqref{eq:ivp}.
\end{theorem}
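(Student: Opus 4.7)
The plan is to verify that the event trace produced by the algorithm satisfies the three hypotheses of Theorem~\ref{thm:tvd-stab}: it is a forward Euler approximation in the sense of~\eqref{eq:update}, it is locally ordered, and it obeys the CFL constraint~\eqref{eq:cfl-tvd}. Once these are established, the TVD conclusion follows directly. A preliminary step is to show that the event trace is finite and reaches $\tend$; this follows from the quantization of time to integer multiples of $\Delta t_{\min}$ together with the fact that every {\sc update} advances the submesh's $\tprev$ by at least $\Delta t_{\min}$.

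The proof is organized around a collection of loop invariants that hold immediately before and after each execution of {\sc update} and {\sc push\_flux}. The key invariants I would maintain, for every submesh $S$ and every interface with neighbor $N$, are: (I1) the cell-average array $S.u$ equals the forward Euler state obtained by applying~\eqref{eq:update} from the most recent synchronization up to $S.\tprev$, using the integrated fluxes stored in $N.\Sigma\hat{F}$; (I2) $N.\tsync$ equals the most recent time at which both $S$ and the actual neighbor submesh updated simultaneously, and $N.\tsync \le \min(S.\tprev, N.\tprev)$; (I3) between $N.\tsync$ and $\max(S.\tprev, N.\tprev)$, all timestamps recorded belong to exactly one of $S$ or the neighbor, matching the definition of local ordering; and (I4) whenever $S.\tnext$ is scheduled, the CFL condition~\eqref{eq:cfl-tvd} is satisfied for all $t \in (N.\tsync, S.\tnext]$ under the worst-case Lipschitz bounds recorded in $N.K^{\intr}$ and $N.K^{\extr}$.

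I would then proceed by induction on the sequence of events delivered by the discrete event simulator in timestamp order. The {\sc advance} and {\sc accumulate} calls maintain (I1); the bookkeeping on $\tsync$ in both event handlers preserves (I2); local ordering (I3) is preserved because whenever a push flux from $S$ would cause $N.\tprev$ to equal $S.\tprev$, the handler resets $N.\tsync \gets \tau$, closing the ordered epoch and beginning a new one, and whenever an incoming flux would violate the local-ordering or CFL condition, $S$ sets \texttt{forced\_update} so that the offending cell is advanced in lockstep; (I4) is maintained because {\sc compute\_t\_next\_bdry} re-evaluates the inequality appearing in~\eqref{eq:cfl-tvd} at precisely the synchronization time $\tsync$ demanded by the theorem, using the running maxima of the Lipschitz constants on the substepping window.

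The main obstacle will be showing that the algorithm never needs a timestep smaller than $\Delta t_{\min}$ and that the cascade of forced updates terminates. Here the hypothesis~\eqref{eq:dt-min} enters decisively: since $|C_j|, D_j \le \max(K_1,K_2)/\Delta x_{\min}$ pointwise on $\range(u_0)$ (which by Theorem~\ref{thm:max} also contains the later-time range), the bound $\Delta t_{\min} < \Delta x_{\min}/(2\max(K_1,K_2))$ ensures that a pair of equal substeps of length $\Delta t_{\min}$ always satisfies~\eqref{eq:cfl-tvd}, so {\sc compute\_t\_next} never returns a value below $\Delta t_{\min}$ and no update is refused. For termination of the inline cascade triggered when a {\sc push\_flux} executes a forced {\sc update} that dispatches further push fluxes, I would argue that all events in such a cascade share the same timestamp $\tau$ and can only affect submeshes whose $\tprev < \tau$, bounding the cascade by the number of submeshes and ruling out Zeno accumulation. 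With the invariants and these two finiteness arguments established, Theorem~\ref{thm:tvd-stab} applies and yields the claimed TVD property.
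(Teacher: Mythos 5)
Your proposal follows essentially the same route as the paper: the paper's proof likewise proceeds by loop invariants (local ordering, CFL, correctness, consistency, and progress) maintained inductively over events, establishes the same progress guarantee from~\eqref{eq:dt-min} together with the maximum principle so that {\sc compute\_t\_next} never falls below $\Delta t_{\min}$, and bounds the same-timestamp cascade by a constant multiple of the number of submeshes. The only substantive items your outline leaves implicit are the paper's explicit consistency invariant for the duplicated interface metadata and its progress invariant guaranteeing that a pending event always exists, both of which the paper verifies through a finite-state-machine case analysis of the reachable submesh states.
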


\subsection{Proof of Theorem \ref{thm:des-tvd}}
The proof of the theorem requires demonstrating that the discrete event simulation generates an event trace that satisfies the conditions of Theorem~\ref{thm:tvd-stab}. The main machinery for this proof will be loop invariants. Loop invariants are a formal correctness technique whereby we specify a set of ``correct'' states. By showing that every event maps a correct state onto the set of correct states, once all events have executed, being in the correct state implies satisfying desired criteria. This proof technique has been utilized with great success for systematic design of algorithms for numerical linear algebra as part of the FLAME project~\cite{Bientinesi2011, Myers2018}.
 To demonstrate that the algorithm satisfies the conditions of Theorem~\ref{thm:tvd-stab}, we need invariants to satisfy:
\begin{enumerate}
\item the local ordering principle,
\item the CFL condition,
\item the update rule as expressed in \eqref{eq:update}.
\end{enumerate}
Additionally, we will require two further invariants to show that the computed values are correct, and that duplicated metadata in the boundaries is consistent with the values on the neighboring submesh. To simplify, the notation in these invariants, we assume three submeshes are enumerated as, $S_A$, $S_B$, and $S_C$. The value of $u$ at the cell neighboring another submesh, will be indexed with that neighbor's name. For clarity, we have depicted this naming convention in Figure~\ref{fig:submeshes}.
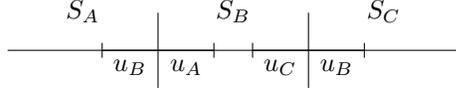
\begin{figure}
\centering
  \begin{tikzpicture}
    \draw (-3,0)-- (-1,0) node[midway, above, label={[distance=1cm]:$S_A$}] {};
    \draw (-1,0)-- ( 1,0) node[midway, above, label={[distance=1cm]:$S_B$}] {};
    \draw ( 1,0)-- ( 3,0) node[midway, above, label={[distance=1cm]:$S_C$}] {};
    \draw (-1,-0.5)--(-1,0.5);
    \draw ( 1,-0.5)--( 1,0.5);
    \draw[|-] (-1.75,0)--(-1,0) node[midway,below] {$u_B$};
    \draw[-|] (-1,0)--(-0.25,0) node[midway,below] {$u_A$};
    \draw[|-] ( 0.25,0)--( 1,0) node[midway,below] {$u_C$};
    \draw[-|] ( 1,0)--( 1.75,0) node[midway,below] {$u_B$};
  \end{tikzpicture}
\caption{Three neighboring submeshes $S_A,\,S_B,$ and $S_C$ with bordering cells named.}
\label{fig:submeshes}
\end{figure}

Local ordering implies that one of two neighboring submeshes must have last updated at the last synchronization time.  To ensure that the event trace will be locally ordered, we define the local ordering invariant for the left interface as
\begin{align}
LO_{\ell}(S_B,\tau) &= (S_B.\tprev = S_B.\ell.\tsync\,\lor\,S_B.\ell.\tprev = S_B.\ell.\tsync) \label{eq:LO1}
\end{align}
We similarly define $LO_r(S_B,\tau)$ for the right interface. The $\tau$ in the arguments of the loop invariants refers to the simulation time at which we evaluate the invariant.

Next, we define the CFL-related invariants, which ensure that our algorithm satisfies the CFL condition~\eqref{eq:cfl-tvd}. Define the invariant for the left boundary cell as
\begin{align}
CFL_{\ell}(S_B, \tau) &= ( S_B.\tnext - S_B.\ell.\tsync) S_B.\ell.K^{\extr} \le 1/2 \label{eq:CFL:ext}\\
& \land\, (S_B.\tnext - S_B.\tprev) S_B.\ell.K^{\intr} \le 1/2, \label{eq:CFL:int}
\end{align}
where $K$ is used to bound the respective $C_j$ and $D_j$ terms. Note that \eqref{eq:CFL:ext} corresponds to interface between submeshes $S_A$ and $S_B$, thus the CFL condition is relative to $S_B.\ell.\tsync$. The other CFL condition \eqref{eq:CFL:int} corresponds to the interior interface of the cell associated with $S_B.u_a$. Since the internal cells step synchronously, the last synchronization time is $S_B.\tprev$, but since $S_B.\ell.K^{\intr}$ depends on $S_B.\ell.u$, this condition must be re-evaluated after each push flux. We  define a CFL invariant for the right interface, $CFL_r(S_B, \tau)$ similarly. We also define an internal CFL invariant, $CFL_{\intr}$, which can be determined by solely examining the internal state of the submesh.
\begin{equation*}
    CFL_{\intr} = \bigwedge_{j=2}^{n_{el}-1} \big( (S_B.\tnext - S_B.\tprev) K_j \le 1/2 \big),
\end{equation*}
where $K_j = \max(-C_j, D_j)$
We then define our CFL invariant as
\begin{equation}
CFL(S_B,\tau) =  (CFL_{\intr}\,\land\,CFL_{\ell}\,\land\,CFL_r). \label{eq:CFL:1}
\end{equation}
The correctness invariant $CR$ checks that all computations are being done correctly. Here, we use $\tilde{S}_B$ to denote the state before executing an event.
\begin{align}
CR&(S_B,\tau) = \big( (S_B.\tprev =\tilde{S}_B.\tprev\,\land\,S_B.u = \tilde{S}_B.u) \label{eq:CR1}\\
&\quad\quad\lor (S_B.\tprev \neq \tilde{S}_B.\tprev \,\land\,S_B\text{ updated according to Equation~\eqref{eq:update}} \big)\label{eq:CR2}\\
&\land \left( S_B.\ell.\Sigma \hat{F} = \int_{S_B.\tprev}^{\max(S_B.\ell.\tprev,S_B.\tprev)} \hat{F}(S_B.\ell.u(\sigma), S_B.u_A) \,\mathrm{d}\sigma \right)\label{eq:CR3}\\
&\land \left( S_B.r.\Sigma \hat{F} = \int_{S_B.\tprev}^{\max(S_B.r.\tprev,S_B.\tprev)} \hat{F}(S_B.u_C, S_B.r.u(\sigma)) \,\mathrm{d}\sigma \right)\label{eq:CR4}\\
& \land\,S_B.\ell.K^{\intr} \ge \max_{\sigma \in (S_B.\tprev, \tau)} D_j(S_B.\ell.u(\sigma), S_B.u_A, S_B.u_{A+1})\label{eq:CR5}\\
& \land\,S_B.\ell.K^{\extr} \ge \max_{\sigma \in (S_B.\ell.\tsync, \tau)} -C_j(S_B.\ell.u(\sigma), S_B.u_A, S_B.u_{A+1})\label{eq:CR6}\\
& \land\,S_B.r.K^{\intr} \ge \max_{\sigma \in (S_B.\tprev, \tau)} -C_j(S_B.u_{C-1}, S_B.u_C, S_B.r.u(\sigma))\label{eq:CR7}\\
& \land\,S_B.r.K^{\extr} \ge \max_{\sigma \in (S_B.r.\tsync, \tau)} D_j(S_B.u_{C-1}, S_B.u_C, S_B.r.u(\sigma)).\label{eq:CR8}
\end{align}
 Equations~\eqref{eq:CR1} and~\eqref{eq:CR2} require the state updates according to the update rule. Equations \eqref{eq:CR3} and~\eqref{eq:CR4} assert flux buffers are correctly integrated at the boundary. The last set of equations \eqref{eq:CR5}--\eqref{eq:CR8} check that the $K$ terms used to enforced the CFL condition correctly bound the Lipschitz constants of the numerical flux. 
 For this invariant to be well-defined, we require that each submesh contain at least two cells. Otherwise, terms like $S_B.u_{A+1}$ would not exist.

The previous three invariants, $LO, CFL,$ and $CR$, have all used information available on submesh $S_B$. However, when duplicating information between submeshes, that information must be consistent. Thus, we define the consistency invariant as
\begin{align}
CI(S_B, S_A, \tau) &= \big( S_B.u_A = S_A.r.u_B \,\land\, S_B.\tprev = S_A.r.\tprev)\label{eq:CI1}\\
&\quad\quad \lor\, \exists\,(\tau,\pushflux(S_A.id, S_B.id, S_B.u_a, \cdot))
\in\pq \big)\label{eq:CI2}\\
&\land\,\big( S_B.\ell.\tsync = S_A.r.\tsync\label{eq:CI3}\\
&\quad \lor\, ( S_B.\ell.\tsync = \tau \,\land\,\exists\,(\tau,\pushflux(S_A.id,S_B.id,\cdot,\cdot))\in \pq\label{eq:CI4}\\
&\quad\lor\, (S_A.r.\tsync=\tau \,\land\,\exists\,(\tau,\pushflux(S_B.id,S_A.id, \cdot, \cdot)) \in \pq \big)\label{eq:CI5}.
\end{align}
Here we use the dot, $\cdot$ to denote any argument. Note that we similarly define consistency for the other interface between $S_B$ and $S_C$.

The last invariant we define is a progress invariant to ensure that the simulation can make progress,
\begin{align}
P(S_B, \tau) &= (S_B.\tprev = t_{end})\\
&\lor  (\exists\,(s,\update(s,S_B,false))\in\pq \, :\, \{s = \,S_B.\tnext \ge \tau > S_B.\tprev\})\label{eq:P2}\\
&\lor  \exists\,(\tau, \pushflux(S_B.id, \cdot, \cdot,\cdot))\in \pq \label{eq:P3} \\
&\lor  \exists\,(\tau, \pushflux(\cdot, S_B.id, \cdot, true))\in\pq. \label{eq:P4}
\end{align}
Combining these five loop invariants, we arrive at
\begin{align*}
I(S_B, S_A, S_C, \tau) &= LO_{\ell}(S_B) \,\land\, LO_r(S_B) \,\land\,CFL(S_B, \tau) \land CR(S_B, \tau)\\
&\land\,CI(S_B,S_A, \tau) \,\land\, CI(S_B,S_C,\tau)\,\land P(S_B,\tau).
\end{align*}

As the aim is to ensure the invariants hold for all submeshes, we will refer to $I(\tau)$ (without the submesh arguments) as
\begin{equation*}
I(\tau) = \bigwedge_{k=1}^{\nsbmsh} I(S_k, S_{k-1}, S_{k+1}, \tau).
\end{equation*}
\begin{proposition}
A discrete event simulation which satisfies the invariant $I$ between all events executed before time $\tau$, will execute a finite number of events at $\tau$ and satisfy $I$ after each event.
\label{prop:1step}
\end{proposition}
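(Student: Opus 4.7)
The plan is to prove the proposition by a two-stage argument: first establish that only finitely many events occur at time $\tau$, and then perform a case analysis on the two event types (\textsc{update} and \textsc{push\_flux}) to verify that each of the sub-invariants $LO_\ell, LO_r, CFL, CR, CI,$ and $P$ is re-established after the event fires, using the pre-event invariant $I$ as hypothesis.

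For finiteness at $\tau$, I would argue structurally from the pseudocode. Each submesh can process at most one \emph{effective} update at $\tau$: the two guards at the top of \textsc{update} (lines checking $\tau \neq \text{submesh}.\tnext \land \neg\text{update\_forced}$ and $\tau = \text{submesh}.\tprev$) force every subsequent call on the same submesh at the same simulation time to return immediately once $\text{submesh}.\tprev$ has been set to $\tau$. A single \textsc{update} schedules at most one \textsc{push\_flux} per boundary, and a \textsc{push\_flux} can trigger at most one inlined \textsc{update} on the receiving submesh (which is then blocked on further invocations by the same guard). Hence the total count of events firing at $\tau$ is bounded by a constant times $\nsbmsh$.

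For invariant preservation, I would dispatch on which event is fired and, within each, on which submesh's invariant we are verifying (the firing submesh, its neighbor, or a distant submesh). The distant-submesh case is trivial since no field it references is modified. For the firing and neighbor submeshes, I would walk through the sub-invariants in the order they admit the cleanest argument. Correctness $CR$ follows directly from the fact that \textsc{advance} implements \eqref{eq:update} and that \textsc{accumulate} and \textsc{update\_K\_bdry} are called at exactly the right places to maintain \eqref{eq:CR3}--\eqref{eq:CR8}. Consistency $CI$ follows because each mutation of a boundary-owned field on the firing submesh either immediately schedules a compensating \textsc{push\_flux} at $\tau$ (satisfying the disjuncts \eqref{eq:CI2}, \eqref{eq:CI4}, \eqref{eq:CI5}) or does not break the equality disjunct. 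Local ordering $LO_\ell, LO_r$ is maintained because \textsc{update} updates $\text{neigh}.\tsync$ exactly when a forced update is issued, and \textsc{push\_flux} updates its $\tsync$ exactly when the received $\text{neigh}.\tprev$ equals the local $\tprev$. Progress $P$ follows by a direct read-off: whenever $\tnext > \tau$, \textsc{update} schedules a future \textsc{update} satisfying \eqref{eq:P2}; whenever $\tnext \le \tau$ or \texttt{force\_update} fires, an inline update is scheduled satisfying \eqref{eq:P3} or \eqref{eq:P4}; in both \textsc{update} and \textsc{push\_flux}, at least one of these branches is always taken unless $\tprev = \tend$.

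The main obstacle, which I expect to require the most care, is the $CFL$ invariant. The subtlety is that $S_B.\tnext$ is recomputed via \textsc{compute\_t\_next} immediately after state changes, but the external CFL bound \eqref{eq:CFL:ext} for a neighbor $S_A$ is phrased in terms of $S_A.r.\tsync$, which may not equal $S_B.\ell.\tsync$ at intermediate points in the chain of inlined events. I would therefore verify $CFL$ by pairing it with $CI$: whenever the $\tsync$ fields are desynchronized, $CI$ guarantees a pending \textsc{push\_flux} at $\tau$ in $\pq$ that will execute before $\tau$ is exhausted, and the post-processing of that event re-synchronizes $\tsync$ on both sides. Because \textsc{compute\_t\_next\_bdry} evaluates both internal and external CFL conditions using the current $K^{\intr},K^{\extr}$ bounds (which $CR$ ensures are valid upper bounds on the relevant $C_j, D_j$), the resulting $\tnext$ is guaranteed to satisfy the $1/2$ bound in \eqref{eq:CFL:ext}--\eqref{eq:CFL:int} once all events at $\tau$ have drained. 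Finally, an appeal to the finiteness argument lets me conclude that after the last event at $\tau$ fires, the full invariant $I(\tau)$ holds on every submesh.
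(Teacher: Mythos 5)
Your finiteness argument and your treatment of $CR$, $CI$, and $LO$ track the paper's proof closely (the paper organizes the case analysis as an explicit finite state machine over six Boolean state variables, but the underlying case split --- update, push flux from the left, push flux from the right, with sub-cases on whether the submesh has already updated --- is the same, and the paper likewise dispatches $CR$ and the $CFL$ invariant ``by construction,'' since \textsc{compute\_t\_next} by definition never proposes a $\tnext$ violating \eqref{eq:CFL:ext}--\eqref{eq:CFL:int} and is re-invoked after every mutation of the $K$ fields). However, there is a genuine gap in your handling of the progress invariant $P$, which you dismiss as ``a direct read-off.'' It is not. The problematic case is a submesh that has already updated at $\tau$ and has consumed the push fluxes from both neighbors: the guards in \textsc{update} make any further inlined update a no-op, so the \emph{only} way to re-establish $P$ is to schedule a future update, which requires $\text{\sc compute\_t\_next}(\text{submesh},\tau) > \tau$. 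Nothing in the pseudocode forces this --- after \textsc{update\_K\_bdry} raises the Lipschitz bounds, the CFL condition could in principle demand a timestep smaller than $\Delta t_{\min}$, leaving no admissible $\tnext > \tau$ and no pending event, so that every disjunct of $P$ fails and the simulation deadlocks.

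The paper closes this hole with two auxiliary results that your plan omits entirely: a maximum principle (Proposition~\ref{prop:des-max}, showing that if $I$ has held up to $\tau$ then all cell values remain in $\range(u_0)$) and the progress guarantee (Lemma~\ref{lem:progress-guarantee}, which combines the maximum principle with the hypothesis \eqref{eq:dt-min} on $\Delta t_{\min}$ to conclude that once a submesh and its neighbors have updated at $\tau$, a step of size $\Delta t_{\min}$ always satisfies the CFL condition, hence $\text{\sc compute\_t\_next} \ge \tau + \Delta t_{\min}$). This is precisely where the quantitative minimum-timestep hypothesis of Theorem~\ref{thm:des-tvd} enters the proof; a version of Proposition~\ref{prop:1step} that never invokes it cannot be correct, since for an arbitrary $\Delta t_{\min}$ the progress invariant is simply false. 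Separately, your identification of the $CFL$ invariant as ``the main obstacle'' is misplaced: the invariant $CFL(S_B,\tau)$ is stated entirely in terms of $S_B$'s own fields ($S_B.\ell.\tsync$, not $S_A.r.\tsync$), so the cross-submesh desynchronization you worry about does not arise there --- it is handled under $CI$, where the paper (and your plan) already account for it via the pending-push-flux disjuncts.
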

Assuming Proposition~\ref{prop:1step} holds, the remainder of the proof of Theorem~\ref{thm:des-tvd} follows by induction.
Satisfying $I$ implies that every update at $\tau$ satisfies the update rule \eqref{eq:update} and the CFL condition \eqref{eq:cfl-tvd}.
Once all the events have executed, remaining in $I$ implies that the event trace is locally ordered and each submesh must have a CFL-satisfying update scheduled for time $\tau+\Delta t_{\min}$ or greater. Lastly, 
Since the solution remains constant from $\tau$ to $\tau + \Delta t_{\min}$, we have established the inductive hypothesis. Arguing inductively, the event trace at $\tend$ is locally ordered, satisfies the CFL condition, and obeys the forward Euler update rule. Therefore, the discrete event simulation will generate a TVD solution.

\subsection{Proof of Proposition \ref{prop:1step}}
The remainder of the proof relies on demonstrating that the simulation state satisfies the invariants after each event is evaluated at time $\tau$. Before we proceed, we prove a progress guarantee.

\begin{proposition}
Given a discrete event simulation that satisfies invariant $I$ up until time $\tau$, $u(\tau)$ will satisfy a maximum principle.
\label{prop:des-max}
\end{proposition}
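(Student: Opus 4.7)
The plan is to show that the invariants guaranteed by $I$ at any event time $\tau$ suffice to invoke Theorem~\ref{thm:max} cell-by-cell. Since Theorem~\ref{thm:max} only requires the forward Euler update rule plus the pointwise CFL bound
\begin{equation*}
1 + \int_{t_j^n}^{t_j^{n+1}} \big[C_j(\sigma) - D_j(\sigma)\big]\,\mathrm{d}\sigma \ge 0,
\end{equation*}
the task reduces to extracting this inequality from the algorithmic invariants $CR$ and $CFL$.

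First I would argue that the $CR$ invariant, specifically~\eqref{eq:CR1}--\eqref{eq:CR2}, guarantees that any transition $(\tilde S_B.\tprev,\tilde S_B.u)\mapsto (S_B.\tprev,S_B.u)$ recorded in the event trace up to time $\tau$ is precisely an instance of the forward Euler update~\eqref{eq:update}. Hence the event trace generated up to $\tau$ is a forward Euler approximation in the sense of the definition, and it suffices to verify the maximum-principle CFL inequality on each update interval $(\tprev,\tnext)$.

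Next I would split into the two geometric cases. For an interior cell of $S_B$, the internal CFL invariant $CFL_{\intr}$ gives $(\tnext-\tprev)K_j \le 1/2$ with $K_j=\max(-C_j,D_j)$, so integrating yields $\int_{\tprev}^{\tnext}(-C_j)\,\mathrm{d}\sigma \le 1/2$ and $\int_{\tprev}^{\tnext} D_j\,\mathrm{d}\sigma \le 1/2$, and the maximum-principle CFL follows. For the two boundary cells of $S_B$ one combines the external part of the $CFL$ invariant, $(\tnext-\ell.\tsync) K^{\extr}\le 1/2$, with the bound $-C_j \le K^{\extr}$ from~\eqref{eq:CR6}, and the internal part $(\tnext-\tprev)K^{\intr}\le 1/2$ with $D_j \le K^{\intr}$ from~\eqref{eq:CR5}; using $\tsync \le \tprev < \tnext$ and integrating produces the same $-1/2,-1/2$ bounds on $\int (-C_j)$ and $\int D_j$. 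In both cases we obtain $1+\int [C_j-D_j]\ge 0$, exactly the hypothesis of Theorem~\ref{thm:max}.

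With the CFL hypothesis verified on every update on every cell, applying Theorem~\ref{thm:max} to the (partial) event trace generated up to $\tau$ yields $|U(\tau,x)| \le \sup_j |U_j^0|$ for every $x$, which is the claimed maximum principle. The main subtlety I expect to have to justify carefully is the alignment of the time intervals on the boundary cells: specifically, that the $CR$ bounds on $K^{\extr}$ are asserted over $(\ell.\tsync,\tau)$ and the $K^{\intr}$ bounds over $(\tprev,\tau)$, both of which contain the update interval $(\tprev,\tnext)$, so that the pointwise bounds $-C_j\le K^{\extr}$, $D_j\le K^{\intr}$ are valid throughout the integration; this uses $\ell.\tsync \le \tprev$, which itself follows from the $LO$ invariant and the algorithmic fact that synchronization times are recorded only when the two submeshes agree at $\tprev$.
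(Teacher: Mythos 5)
Your proof is correct and follows essentially the same route as the paper: both verify the maximum-principle CFL hypothesis~\eqref{eq:cfl-max} of Theorem~\ref{thm:max} by combining the $CFL$ invariant with the $CR$ bounds on $K^{\intr}$ and $K^{\extr}$, using $\tsync \le \tprev$ to widen the integration interval so that each of $\int(-C_j)$ and $\int D_j$ is bounded by $1/2$, and then invoke Theorem~\ref{thm:max}. The paper's version is terser (it does not separate interior from boundary cells or spell out the role of \eqref{eq:CR1}--\eqref{eq:CR2}), but the substance is identical.
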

\begin{proof}
Pick any cell $j$ and consider two update points $t_j^n$ and $t_j^{n+1}$. Let $s_{j-1,j}^{\mu}$ and $s_{j,j+1}^{\eta}$ represent the most recent left and right synchronization times, respectively, and let $u_{j-1}$, $u_j$, $u_{j+1}$ represent the associated cell densities.
Since the CFL invariant is satisfied for all time up until $\tau$,
\begin{align*}
1 &+ \int_{t_j^n}^{t_j^{n+1}} \big[C_j(\sigma) - D_j(\sigma) \big]\,\mathrm{d} \sigma\\
& \ge 1 + (t_j^{n+1} - s_{j-1,j}^{\mu}) \min_{\sigma \in (s_{j-1,j}^{\mu},t_j^{n+1})} C_j(u_{j-1}, u_j, u_{j+1})\\
& \quad\quad - (t_j^{n+1} - s_{j,j+1}^{\eta}) \max_{\sigma \in (s_{j,j+1}^{\eta}, t_j^{n+1})}D_j(u_{j-1}, u_j, u_{j+1})\\
& \ge 0.
\end{align*}
Thus, the event trace satisfies the maximum principle by Theorem~\ref{thm:max}.
\end{proof}

\begin{lemma}[Progress Guarantees]
\label{lem:progress-guarantee}
Given a simulation that satisfies the invariant $I$ up until time $\tau$, if a submesh and its neighbors have updated at time $\tau$, then
\begin{lstlisting}[mathescape=true]
    compute_t_next(submesh, $\tau$) $\ge \tau + \Delta t_{\min}$.
\end{lstlisting}
\end{lemma}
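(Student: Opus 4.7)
The plan is to leverage the maximum principle from Proposition~\ref{prop:des-max} to bound the Lipschitz constants used in the CFL conditions, and then show that the prescribed minimum timestep is small enough to satisfy all four CFL constraints (internal, external on the left, external on the right, and interior) when measured from $\tau$. In other words, we must show that the value returned by \texttt{compute\_t\_next} — which selects the largest $\tnext$ consistent with the CFL invariants~\eqref{eq:CFL:ext}--\eqref{eq:CFL:int} and $CFL_{\intr}$ — is bounded below by $\tau + \Delta t_{\min}$.

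First I would use the hypothesis that the submesh and both of its neighbors have just updated at $\tau$, together with the consistency invariants $CI$ and the local ordering invariants $LO_\ell,LO_r$. These imply that, after the update events and associated push fluxes have been processed, every relevant synchronization marker satisfies $S_B.\tprev = S_B.\ell.\tprev = S_B.\ell.\tsync = \tau$ and similarly for the right interface. Therefore all four CFL constraints controlling \texttt{compute\_t\_next} reduce to constraints of the form $(\tnext - \tau) K_\star \le 1/2$, where $K_\star$ is one of $K^{\intr},K^{\extr}$ on the two boundary interfaces or one of the interior $K_j$.

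Next I would bound each $K_\star$ using the correctness invariant $CR$ and the maximum principle. Since all events up to $\tau$ satisfy $I$, Proposition~\ref{prop:des-max} gives that every cell average on the submesh and its neighbors lies in $\range(u_0)$. The invariants~\eqref{eq:CR5}--\eqref{eq:CR8} together with the definitions of $C_j$ and $D_j$ then yield
\begin{equation*}
K_\star \le \frac{\max\bigl(K_1(\xi),K_2(\xi)\bigr)}{\Delta x_j} \le \frac{\sup_{\xi\in \range(u_0)} \max\bigl(K_1(\xi),K_2(\xi)\bigr)}{\Delta x_{\min}},
\end{equation*}
for every $K_\star$ that appears, using the fact that $\hat F(\cdot,\xi)$ and $\hat F(\xi,\cdot)$ are Lipschitz with constants $K_1(\xi),K_2(\xi)$ and that $C_j,D_j$ each pick up one factor of $1/\Delta x_j$.

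Finally I would plug in the definition~\eqref{eq:dt-min} of $\Delta t_{\min}$ to conclude
\begin{equation*}
\Delta t_{\min}\,K_\star \;<\; \Delta t_{\min}\cdot \frac{\sup_{\xi\in\range(u_0)}\max(K_1,K_2)}{\Delta x_{\min}} \;<\; \tfrac{1}{2},
\end{equation*}
so that choosing $\tnext = \tau + \Delta t_{\min}$ satisfies every CFL constraint used by \texttt{compute\_t\_next}; since that routine returns the \emph{largest} admissible $\tnext$, the claim follows. The main subtlety — and the part I would be most careful about — is the bookkeeping in the first step: verifying that when a submesh and \emph{both} neighbors have updated at $\tau$, the invariants force $\tsync = \tau$ on both interfaces (as opposed to a still-pending synchronization), since this is what lets the external CFL constraints be rewritten as constraints on $\tnext - \tau$ rather than on a larger interval that would require a smaller admissible step.
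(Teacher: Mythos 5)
Your proposal is correct and follows essentially the same route as the paper's proof: invoke Proposition~\ref{prop:des-max} to place all relevant cell averages in $\range(u_0)$, bound $-C_j$ and $D_j$ by $K_1(\xi)/\Delta x_{\min}$ and $K_2(\xi)/\Delta x_{\min}$, and conclude from the definition~\eqref{eq:dt-min} of $\Delta t_{\min}$ that a step of size $\Delta t_{\min}$ satisfies every CFL constraint. Your explicit bookkeeping that the hypothesis forces all synchronization times to equal $\tau$ (so the constraints are measured from $\tau$) is left implicit in the paper but is exactly the intended role of that hypothesis.
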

\begin{proof}
Pick any cell and consider the two neighboring cells. Enumerate the densities as $u_{j-1},\,u_j$ and $u_{j+1}$. By Proposition \ref{prop:des-max}, $u_{j-1}, u_j, u_{j+1} \in \range(u_0)$.
By definition of the Lipschitz constant
\begin{equation*}
-C_j(u_{j-1}, u_j, u_{j+1}) \le K_1(u_{j+1})/\Delta x_{\min}.
\end{equation*}
 Therefore,
\begin{align*}
-\Delta t_{\min} C_j(u_{j-1}, u_j, u_{j+1}) \le \frac{K_1(u_{j+1})}{\Delta x_{\min}} \frac{\Delta x_{\min}}{2K_1(u_{j+1})} = \frac{1}{2}
\end{align*}
The proof for the right interface of cell $j$ follows similarly. Thus, taking a timestep of size $\Delta t_{\min}$ satisfies~\eqref{eq:cfl-tvd}.
Since this holds for every cell in the submesh,
\begin{lstlisting}[mathescape=true]
compute_t_next(submesh, $\tau$) $\ge \tau + \Delta t_{\min}$.
\end{lstlisting}

\end{proof}

To see that there are at most a finite number of messages sent at each $\tau$, note that each update will only execute the main body at most once for a given timestep. If multiple updates are scheduled for the same time $\tau$, $S_B.\tprev = \tau$ after the first update, and subsequent updates become no-ops. Furthermore, since push fluxes can only be scheduled when an update has executed at that timestep, we bound the total number of events scheduled at a given timestep by $3 \nsbmsh$.

Completing the proof requires showing that as each event at time $\tau$ is processed, $I$ is not violated. To do so, we will enumerate some states of the submesh and show that execution of any event will not lead to an invariant violation. In order to determine the state of a submesh $S$, we require 6 Boolean variables:
\begin{align*}
b_1 &= (S.\tprev < \tau)  &&b_2 = (S.\tnext > \tau)\\
b_3 &= (S.\ell.\tprev < \tau)  &&b_4 = (S.\ell.\tsync \le S.\ell.\tprev)\\
b_5 &= (S.r.\tprev < \tau)  &&b_6 = (S.r.\tsync \le S.r.\tprev).
\end{align*}

\begin{table}
\caption{All possible states as a submesh $S$ processes various messages. Without loss of generality the first push flux is assumed to arrive from the left.}
\label{tab:firstmsg}
\centering
\begin{tabular}{|c||c|c|c|c|c|c|}
\hline
& $b_1$ & $b_2$ & $b_3$ & $b_4$ & $b_5$ & $b_6$\\ \hline\hline
$q_a$ & True & True & True & True & True & True \\ \hline
$q_b$ & True & False & True & True & True & True \\ \hline
$q_c$ & False & True & True & True & True & True \\ \hline
$q_d$ & False & True $\lor$ False & True & False & True & True\\ \hline
$q_e$ & False & True $\lor$ False & True & True & True & False\\ \hline
$q_f$ & False & True $\lor$ False & True & False & True & False\\ \hline
$q_g$ & True & True & False & True & True & True\\ \hline
$q_h$ & False & True & False & True & True & True \\ \hline
$q_i$ & False & True $\lor$ False & False & True & True & False \\ \hline
$q_j$ & True & True & False & True & False & True \\ \hline
$q_k$ & False & True & False & True & False & True \\ \hline
\end{tabular}
\end{table}

\begin{figure}
\centering
\begin{tikzpicture}[node distance=1.5cm and 3cm, auto]

\node[initial,state,accepting] (A) {$q_a$};

\node[initial,state,accepting] (B) [below= of A, yshift=-1.cm] {$q_b$};

\node[state] (F) [right= of B, yshift=-1.8cm, xshift=-1cm] {$q_f$};
\node[state] (E) [right= of B, yshift=-0.7cm, xshift=-1cm] {$q_e$};
\node[state] (D) [right= of B, yshift=1.5cm, xshift=-1.25cm] {$q_d$};
\node[state] (C) [accepting, right= of B, yshift=2.35cm, xshift=-2cm] {$q_c$};

\node[state,accepting] (H)  [right= of C, yshift=0.2cm] {$q_h$};
\node[state] (I) [right= of E, yshift=1.55cm, xshift=-1.15cm] {$q_i$};

\node[state,accepting] (G) [above= of H,yshift=-1.2cm] {$q_g$};
\node[state,accepting] (J) [right= of G,yshift=-1cm,xshift=-1cm]  {$q_j$};

\node[state,accepting] (K) [right= of H,yshift=-1cm,xshift=-1.25cm] {$q_k$};

\path (A) edge [->, out=30, in=165, above] node {} (G)
          edge [->, out=10, in=150, above] node {} (H)
          edge [->, out=0, in=120,  above] node {} (I)
      (B) edge [->, above, loosely dashdotted]      node {} (C)
          edge [->, above, loosely dashdotted]      node {} (D)
          edge [->, below, loosely dashdotted]      node {} (E)
          edge [->, below, loosely dashdotted]      node {} (F)
          edge [->, out=10, in=225, below]  node {} (H)
          edge [->, out=0, in=195, above] node {} (I)
      (C) edge [->, above]      node {} (H)
      (D) edge [->, above]      node {} (H)
      (E) edge [->, below]      node {} (I)
      (F) edge [->, below]      node {} (I)
      (G) edge [->, above,densely dotted]      node {} (J)
          edge [->, above,densely dotted]      node {} (K)
      (H) edge [->, above,densely dotted]      node {} (K)
      (I) edge [->, above,densely dotted]      node {} (K);
\end{tikzpicture}
\caption{Possible state transitions during a single timestamp. Accepting states are denoted with double circles. The line styles indicate the type of message: the solid line denotes a push flux from the left neighbor, the dotted line denotes a push flux from the right neighbor, the dash-dotted line denotes an update. Without loss of generality we assume that the first push flux processed arrives from the left.}
\label{fig:fsm}
\end{figure}
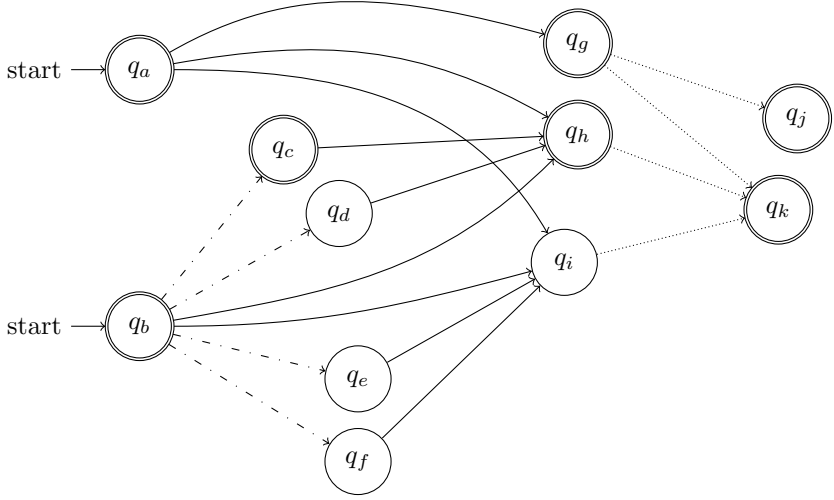
Our aim will be to derive a finite state machine and show that all transitions preserve the loop invariants. To reduce the complexity of the system we assume that if a submesh receives a message, the first message it processes will come from the left.
All states that are attained are shown in Table~\ref{tab:firstmsg} and their transitions are pictorially shown in Figure~\ref{fig:fsm}. We note the non-standard notation depicting multiple arrows coming out of given states. These multiple arrows leaving a state are due to the fact that our binary state representation doesn't fully capture the internal state of any given submesh. In particular, when any given message is processed, the algorithm deterministically computes the appropriate transition based on the values at the cells, e.g. does the submesh need to execute an inlined update. This representation could be expanded to recover the traditional finite state machine representation at significantly greater complexity. 

Before we assess any state transitions, we note that the $CFL$ invariant checks whether or not the proposed $S_B.\tnext$ satisfies the invariant. The function {\sc compute\_t\_next}, which exclusively sets $S_B.\tnext$ is constructed to always satisfy the $CFL$ invariant, i.e. we never propose a timestep that would cause a CFL violation. 
Similarly, the helper functions are implemented to satisfy the correctness invariant $CR$. In both cases, the invariants determine the implementation. Thus, these invariants hold by construction.

We begin by noting that before any submesh has processed any event at time $\tau$, the last events must have been processed at a time strictly less than $\tau$ and synchronization times are only updated when two neighboring submeshes update at the same time, all submeshes must begin in states $q_a$ or $q_b$. These states differ based on whether or not there is an update scheduled at $\tau$.

\subsubsection{Processing Updates}
An update will only be applied to submeshes for which $S_B.\tnext = \tau$ and $S_B.\tprev < \tau$. Otherwise, the update becomes a no-op. Thus, non-trivial update events can only be applied to submeshes in state $q_b$. 
\begin{enumerate}
    \item[$LO$:] To be locally ordered either the submesh or its neighbor must have last updated at the synchronization time. 
    Consider the left side. If $S_B.\ell.\tprev = S_B.\ell.\tsync$, $LO_{\ell}$ continues to hold after the update. Otherwise, the neighbor has updated since the last synchronization, and the current update schedules a push flux that forces the neighbor to update at $\tau$, synchronizing the submeshes at $\tau$.  The update sets the synchronization time $S_B.\ell.\tsync$ to $\tau$ in anticipation of the scheduled synchronization, satisfying the local ordering invariant. The right side follows similarly.
    \item[$P$:] The scheduled push fluxes satisfy~\eqref{eq:P4}.
    \item[$CI$:] Updating a submesh causes the consistency invariant~\eqref{eq:CI1} to fail on the neighbor. However, the submesh sends a push flux. Thus, Equation~\eqref{eq:CI2} holds. If the synchronization times remain unchanged, $S_B.\ell.\tsync=S_A.r.\tsync$ continues to hold. Otherwise, the update schedules a forced push flux, which implies that $S_B.\ell.\tsync=\tau$ and there exists a forced push flux to $S_A$.
\end{enumerate}
 Thus, the submesh satisfies $I$ after processing the update. The submesh transitions to states $q_c-q_f$ according to which sides need to receive a push flux from their neighboring submesh in order to satisfy local ordering or allow scheduling the next non-trivial update. 

\subsubsection{Processing the Push Flux from the Left}
Without loss of generality we assume that the first message comes from the left. Since no push fluxes have been processed, we only consider submeshes in initial states $q_a$ or $q_b$ or states after executing an update $q_c$-$q_f$. 
There are three cases of states to consider: (i) submeshes that have not updated, but update during the push flux, (ii) submeshes that have not updated and do not update during the push flux, and (iii) submeshes that have already updated.
\begin{enumerate}
    \item[$LO$:] For case (i), the processing of the push flux and inlined update implies $S_A$ and $S_B$ are synchronized and $S_B.\ell.\tprev=S_B.\tprev=S_B.\ell.\tsync$. Thus, $LO_{\ell}$ holds. On the right side, $LO_r$ is satisfied using the same logic when updating a submesh. For case (ii), processing the push flux and not updating, implies that $S_A.r.\tprev=S_A.r.\tsync$. Since $S_B.\tprev < \tau$, the consistency invariant implies $S_B.\tprev = S_B.\ell.\tsync$. Since $S_B.\tprev$ and $S_B.r$ are unchanged, $LO_r$ continues to hold. For case (iii), $LO_{\ell}$ holds by the same logic used for case (i), and since $S_B.\tprev$ and $S_B.r$ remain unchanged, $LO_r$ holds.
    \item[$P$:] For case (i), the push fluxes scheduled in the inlined update satisfy \eqref{eq:P4}. For case (ii), since the message was processed and did not lead to an inlined update, $S_B.\tnext > \tau$, and an update has been scheduled satisfying \eqref{eq:P2}. For case (iii), if after the push flux we are able to schedule an update greater than $\tau$, \eqref{eq:P2} is satisfied. Otherwise, the progress guarantee implies that being unable to schedule a future update must arise due to the CFL condition at the right boundary. When the submesh updated it must have sent a forced push flux to the right neighbor, satisfying \eqref{eq:P4}. If that push flux has already been consumed, the right neighbor has updated at $\tau$ and sent a push flux  to the submesh, satisfying \eqref{eq:P3}.
    \item[$CI$:] For case (i) the push flux sent to $S_A$ and the fact that $S_B.\ell.\tsync=\tau$ implies that~\eqref{eq:CI2} and~\eqref{eq:CI4} hold. On the right side, $CI(S_B, S_C, \tau)$ holds following the same logic used for the update.
    For case (ii), 
    $CI(S_B,S_C,\tau)$ continues to hold since it held before the push flux was executed. Considering $CI(S_B,S_A,\tau)$, \eqref{eq:CI1} continues to hold.
    Since $S_B$ did not update, $S_A$ did not schedule a forced update, and $S_A.r.\tsync$ remains unchanged from the state before $S_A$ last updated. Therefore, \eqref{eq:CI3} holds.
    For case (iii), $CI(S_B,S_C,\tau)$ holds following the same logic used for case (ii). On the left interface, $S_B.\ell.\tsync=\tau$ after the push flux. When $S_B$ updated at $\tau$ it sent a message to $S_A$. If this message has not been processed \eqref{eq:CI4} holds. Otherwise, $S_A$ has updated and processed the push flux for $S_B$, implying that  $S_A.r.\tsync=\tau=S_B.\ell.\tsync$ and \eqref{eq:CI3} holds.
\end{enumerate}

It is necessary to check that consuming the push flux does not lead to invariant violations of $P$ or $CI$ on $S_A$.
It can be shown that if $P$ or $CI$ on $S_A$ depend on the existence of the push flux to $S_B$, after executing the push flux, there exists a push flux from $S_B$ to $S_A$, satisfying these invariants.

The transitions following the execution of a push flux from the left neighbor depend on whether or not the submesh updated and whether or not it can make progress on the right side. If the submesh did not update at $\tau$---which is only the case for $q_a$---and it is able to schedule an update at a time greater than $\tau$, this will transition to $q_g$. If the mesh previously updated or is updated during the push flux, the state depends on whether or not the mesh is able to make progress on the right hand side.
States $q_c$ and $q_d$ will transition to $q_h$, and states $q_e$ and $q_f$ will transition to $q_i$. Submeshes which execute an inlined update transition depending on the value of $b_6$. If the submesh has issued a force push flux to its right neighbor to enforce local ordering or enable scheduling a future update, the submesh transitions to $q_i$. Otherwise, the submesh transitions to $q_h$.

\subsubsection{Processing the Push Flux from the Right}
Since by assumption the first message arrives from the left, we only need to consider the right push flux applied to states after processing the push flux from the left neighbor, i.e. states $q_i$--$q_f$.
\begin{enumerate}
    \item[$LO$:] If the push flux did not cause an update, then both sides must satisfy the local ordering constraint. Otherwise, the submesh has synchronized with both neighbors and satisfies the local ordering constraint.
    \item[$P$:] If no update is required, that implies $S_B.\tnext>\tau$. Thus, $P$ is satisfied by \eqref{eq:P2}. Otherwise, if the submesh updated at time $\tau$, it will have already processed both neighboring push fluxes. The progress guarantee implies that the simulation will be able to schedule an update at a time greater than $\tau$, satisfying \eqref{eq:P2}.
    \item[$CI$:] The consistency invariant holds making the same arguments used to show the consistency invariant holds after processing the push flux from the left.
\end{enumerate}
The state transition can be determined by whether or not the submesh updated at $\tau$, if it did not the submesh must have started in state $q_g$ and transitions to $q_j$. Otherwise, the submesh transitions to $q_k$.

\begin{remark} This proof imposed no restriction on scheduling order of events with the same timestamp $\tau$. That is to say that neither update nor push flux event with the same timestamp are required to be executed before the other. 
While any ordering of events will provide a total variation solution, we have not shown that it will generate a unique event trace. 
For our implementation, we enforce deterministic execution by explicitly imposing a deterministic tie breaking mechanism. 
\end{remark} 
\section{Implementation Details}
\label{sec:implementation}

By expressing the algorithm as a discrete event simulation, we can use existing parallelization infrastructure to rapidly and efficiently parallelize the proposed local timestepping algorithm. This section introduces Devastator, the parallel discrete event simulator upon which our implementation is based. Additionally, we outline performance optimizations made to the algorithm and load balancing strategies.

\subsection{Devastator Simulation Framework}
Devastator (publication forthcoming) is the parallel discrete event simulation (PDES) framework we have used to implemented this work. As with other PDES frameworks, Devastator expects the simulation to be modeled as a discrete number of logical processes (i.e. actors) producing and consuming timestamped events. Once the logical processes and their event processing behaviors have been defined, Devastator handles the task of progressing and maintaining consistency of the distributed parallel execution. To do this, Devastator employs the TimeWarp algorithm~\cite{Jefferson1985} with an asynchronous algorithm for bounding global virtual time (GVT). Devastator was chosen for this work for its emphasis on performance in HPC environments as well as its productive C++14 interface.

PDES frameworks generally fall in one of two categories in how they maintain consistency of the distributed state, these are named optimistic and conservative. Consider the following scenario: a CPU $A$ in the simulation wants to execute event $E$ having timestamp $T$. How can the CPU be sure that no other event $E'$ with timestamp $T'$ where $T' < T$ is going to be generated by some other CPU $B$ and sent to $A$? 
Optimistic methods, like TimeWarp, execute events before knowing that it is safe to so. To deal with the inevitable causality violations (discovering $E'$ only after executing $E$), TimeWarp performs a {\em rollback} to revert the logical process's state to before execution of $E$, then executes $E'$, then $E$, and carries on. Once GVT passes $T$, the CPU knows that no such further events $E'$ can occur, and the event $E$ is {\em committed}.

Optimistic execution enjoys the ability to operate with high performance in regimes of low communication, dynamically, simply by virtue that it always assumes no communication is needed before executing the next event. This makes it an excellent choice for domains where absolute bounds on the needed inter-CPU synchronization are far tighter than what is required in the average case. 
 In the case of nonlinear conservation laws, determining whether an event is able to execute without incurring a CFL violation requires determining the domain of dependence for that submesh. Due to pathological examples such as the shockwave for Burgers' equation considered in the introduction, this is an inherently non-local problem. In situ computation of the domain of dependence would greatly increase required communication between submeshes. The locality of this problem can be limited by considering smaller timesteps at the cost of available parallelism. However, in practice we assume that the probability of a remote high-speed wave dramatically increasing $|\Lambda|$ and causing CFL violations is very small. Using optimistic execution, we are able to maintain our local communication stencil without limiting parallelism, and only incur significant overhead when timesteps require refining.

\subsection{Performance related optimizations}
While the above presented push flux and update functions specify a TVD timestepping algorithm, three performance optimizations were necessary to obtain good parallel performance.
\begin{enumerate}
    \item {\em Timestep binning:} While the {\sc compute\_t\_next} allows us to take optimally large timesteps, the local ordering constraint makes this approach suboptimal. 
    Neighboring submeshes with large but unequal timesteps will force one another to update to satisfy the local ordering constraint, lowering the average timestep size.
    We propose binning timesteps to the nearest power of two. Specifically, we require that given a certain timestep size ($\Delta t$), we step to largest multiple of the largest power of two multiplied by $\Delta t_{\min}$ less than the given timestep. 
    \item {\em Reducing unnecessary speculation:} While TimeWarp allows us to speculatively update the submeshes, it is not always wise to do so. If a submesh updates and sends a forced push flux to one neighbor, we know that at that time the neighbor will have to send a push flux back. Any events executed on the submesh before the neighboring push flux arrives must be rolled back. Therefore, whenever scheduling new updates the submesh inspects its state to determine whether it is still waiting for a message from one of its neighbors. If so, it will simply return without scheduling any further events. Once the messages the submesh is waiting on arrive, it will schedule the next events without having to roll back events.
    \item {\em Avoiding small timesteps due to binning:} While timestep binning reduces the number of synchronizations required due to local ordering violations, it introduces another problem due to the fact that timesteps at submesh boundaries are computed relative to the previous synchronization time.
    If a message is delayed the recipient submesh will still be able to make small timesteps---enumerating the bits of the unbinned maximum timestep. Once the delayed message arrives these updates will be rolled back and the work was wasted.
    To remedy this, we introduce a heuristic whereby we examine the ratio between the timestep taken if the submesh were synchronized with its neighbors divided by the computed timestep due to a single neighbor, i.e. the value of {\sc compute\_t\_next\_bdry}. If this ratio is greater than or equal to 2, we force that neighbor to update at the current time. The previous optimization (reducing unnecessary speculation) then causes the submesh to wait until the neighbor's push flux has been processed.
\end{enumerate}

\subsection{Performance Modeling and Load Balancing}
\label{sec:load-balancing}
To understand the performance results as well as load balance the problem, we estimate the work for a given problem as follows. At a given simulation time $\tau$, the timestep $dt$ taken by cell $j$ can be approximated as
\begin{equation*}
dt_j(\tau) = \Delta t_{\min} 2^{\left\lfloor \log_2 \frac{ \Delta x_j}{2 |\Lambda(\tau, x_j)| \Delta t_{\min}}\right\rfloor},
\end{equation*}
where $\Delta t_{\min}$ is the smallest timestep, $\Delta x_j$ is the cell size, and $|\Lambda|$ corresponds to the wave speed at time $\tau$ in the midpoint of the cell. The $\log_2$  appropriately bins the timesteps.

The mesh partitioning problem follows a 2-phase process of aggregating cells into submeshes and assigning the submeshes to ranks. Cells within each submesh step synchronously enabling efficient utilization of CPU architectures. 
For the generation of the mapping of cells to submeshes, which we denote by $\pi$, we assume no prior knowledge on $|\Lambda|$, i.e. $|\Lambda| \equiv 1$, and partition solely based on variation in cell sizes. The cell work $w_j$ is the inverse of the smallest timestep taken by any cell assigned to the same submesh.
Since the partitioner balances work across submeshes, but the work depends on the partitioning, there exists a circular dependency.
Hence, we generate $\pi$ using an iterative procedure. Given weights $\{w_j\}$, we create a partition $\pi$, ensuring each submesh has the same amount of work.
 With the new partition, update the weights $\{w_j\}$. Repeat this process until some terminating condition is satisfied. In this paper, we stop iterating after 100 iterations. At the end of the iterative procedure, we return the partition with the least overworked submesh, i.e. if $\pi_{\ell}$ is the $\ell$-th iteration of the submesh partitioner, 
\begin{equation*}
\pi = \argmin_{\{\pi_{\ell}\}} \max_{0 < i < \nsbmsh} \sum_{j=1}^{n_{el}} \chi_{\{\pi_{\ell}(j) = i\}}(j) w_{j}
\end{equation*}
where $\chi_A$ denotes an indicator function over set $A$. The objective of the first partitioning phase is to specify a problem that can be efficiently executed by the parallel discrete event simulator. The proposed scheme generates submeshes such that the number of cells updated is approximately equal across all submeshes. However, due to the dependence of work on partitioning, we must also pay attention to the total work. A proposed partitioning may be perfectly load balanced but require more work and therefore be less preferable than a partitioning which is imbalanced but more work optimal, i.e. for which the discrepancy between $w_j$ and $dt_j^{-1}$ is smaller.

The second partitioning phase assigns submeshes to ranks. The objective of this phase is to minimize the runtime of the simulation. Let $\rho$ map a submesh to its assigned rank. Since the most overworked rank will determine the rate at which GVT is advanced, we estimate the wall-clock time as
\begin{equation}
T = \int_0^{\tend} \max_{0 \le k < n_{ranks}} \sum_{j=1}^{n_{el}}  w_j(\tau) \chi_{\{(\rho \circ \pi)(j) = k\}}(j) \,\mathrm{d} \tau.
\label{eq:lb}
\end{equation}
We remark that for performance results presented in Section~\ref{sec:performance-results}, we consider analytic solutions, thereby yielding a good approximation to $|\Lambda|$ and hence $w_j$.
By using a quadrature rule to approximate the integral, we formulate \eqref{eq:lb} as a mixed integer programming problem, which we solve using Gurobi~\cite{Gurobi}.  

Lastly, we derive upper limits on the achievable speed-up as the ratio of work (i.e. number of cell updates) executed by a standard synchronous timestepping implementation using an MPI runtime divided by the local timestepping Devastator implementation. We calculate the total work for the Devastator runtime as
\begin{equation*}
W^{th}_{deva} = \sum_{j=1}^{n_{el}} \int_0^{\tend} w_{j}(\tau) \, \mathrm{d} \tau.
\end{equation*}
Note that this estimate is a conservative work estimate for the Devastator implementation as it doesn't account for factors such as taking intermediate timesteps between finer and coarser timesteps as well as extra timesteps due to forced updates and rollbacks. For an MPI-based implementation, which steps with uniform timesteps, we compute the work $W^{th}_{MPI}$ as the number of timesteps times the number of cells.
We bound the largest theoretical speed-up by
\begin{equation*}
    S^{th} = \frac{ W^{th}_{MPI}}{W^{th}_{deva}}.
    \label{eq:Sth}
\end{equation*}
This will provide a baseline to assess how efficient the implementation is.

\section{Results}
\label{sec:results}
\begin{figure}
  \centering
  \subfloat[Uniform]{\includegraphics{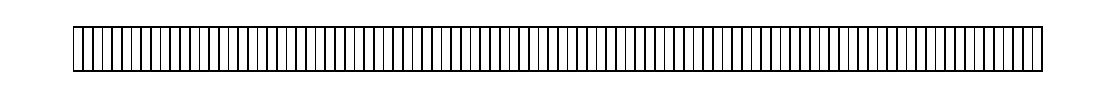}}\\
  \subfloat[Polynomial]{\includegraphics{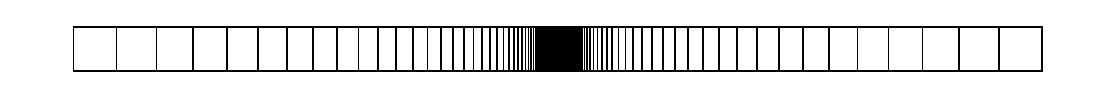}}\\
  \caption{Meshes used for numerical experiments}
  \label{fig:meshes}
\end{figure}

In this section we present results for the one dimensional shallow water equations. Since the timestepping method is first order, we only consider first order finite volume schemes.
To demonstrate the robustness of the timestepping method for different types of meshes, we consider two meshes: a uniform mesh and a polynomial mesh. These meshes are generated by warping uniformly distributed nodes along $(-1,1)$ onto the interval $(-1,1)$. The base case is the uniform mesh for which the warp function is the identity, i.e. $w(x) = x$. The polynomial mesh refines the mesh around the origin. This type of mesh is common for finite element applications where local refinement is required to resolve flow around fine features. The warp function for this mesh is given as
\begin{equation*}
  w(x) = \frac{1}{1/3 + \varepsilon} \left( \frac{ x^3}{3} + \varepsilon x \right),
\end{equation*}
where we set $\varepsilon = 0.02$ in order the bound the ratio of largest to smallest cells. For clarity the meshes used are depicted in Figure~\ref{fig:meshes}. To illustrate the behavior of the timestepping algorithm, we consider meshes with 100 cells and 20 submeshes in the next two sections.  In practice, the number of cells per submesh needs to be significantly larger to amortize runtime overheads with useful work. Section~\ref{sec:performance-results} showcases performance results with meshes consisting of 500,000 cells.

\subsection{Shallow Water Equations}
To show that the local timestepping scheme is robust for non-linear systems, we consider the shallow water equations,
\begin{equation*}
  \begin{cases}
    \partial_t h + \partial_x q_x = 0 &\\
    \partial_t q_x + \partial_x ( hu^2 + gh^2/2) = -gh\partial_x z&\\
  \end{cases}
\end{equation*}
where $q_x = hu$, $z$ is the bathymetry, and $g=1$. We consider here the dam break Riemann problem, for which,
\begin{align*}
  h_0(x) &= \begin{cases}
    1 & \text{if } x < 0,\\
    1/16.1 & \text{if } x > 0,
  \end{cases}\\
  q_{x,0}(x) &= 0,\\
  z &= 0.
\end{align*}
For the shallow water equations, the maximum advection speed, $|\Lambda|$ is  $\sqrt{gh} + |u|$. The initial conditions have been chosen to allow a 4-to-1 timestepping ratio between downstream and upstream of the dam break for the uniform mesh. The second test case we consider is the analytic problem of Carrier and Greenspan~\cite{Carrier1958}. This problem considers water moving up and down a shoreline with uniform slope in a periodic manner. We follow the set-up outlined in~\cite{Bokhove2005} with a phase shift of $\varphi=-\pi$.

For the numerical discretization, we use a local Lax-Friedrichs flux along with the first-order local hydrostatic reconstruction proposed in~\cite{Audusse2004}. The space time plots are shown in Figure~\ref{fig:swe:spacetime}.  We note that for the shallow water flow, the theoretical guarantees no longer hold. In fact, the timestepping region between the two waves emanating from the dam break problem in Figure~\ref{fig:swe:spacetime:uniform:dambreak} requires a finer timestep than by the initial conditions, and thereby violates the progress guarantee, Lemma~\ref{lem:progress-guarantee}. 

Looking at Figure~\ref{fig:swe:spacetime:uniform:dambreak}, we see that submeshes refine their timesteps once the waves from the Riemann problem locally arrive. On the polynomial mesh---shown in Figure~\ref{fig:swe:spacetime:polynomial:dambreak}---around $t=0.14$ the entire mesh begins stepping at the finest frequency. This is due to the fact that cells in the interval $(0.146, 1)$ belong to a single submesh. Since these cells take larger timesteps the submesh partitioner places more cells into this submesh. 

For the Carrier-Greenspan problem on the uniform mesh, shown in Figure~\ref{fig:swe:spacetime:uniform:cg}, we see water moving in and out of the domain. Analytically, the water front never goes past $x=0.25$, and the simulated event trace reflects this as no submesh updates occur for in regions for which $x>0.3$.  For the polynomial mesh in Figure~\ref{fig:swe:spacetime:polynomial:cg}, we again begin stepping everywhere due to the submesh graph partitioning. For both of these cases, we note hysteretic effects in drying regions. The final simulation time of $2 \pi$ contains two periods of the Carrier-Greenspan solution. At time $\pi$,  ideally, only submeshes which contain cells located at $x<-0.25$ would update. Rather we see a slow ``draining'' of mass as submeshes try to coarsen their timesteps, resulting in updates occurring in regions of the mesh that are dry in the analytic solution. 

To conclude, the proposed timestepping method remains stable for simulations with significant temporal variations in $\Lambda$ for the shallow water equations. While we fail to satisfy the assumptions for the proof of correctness and even observe a violation of the progress guarantee, the algorithm is nevertheless able to stably compute the correct solution. 

\begin{figure}
  \centering
  \subfloat[Uniform-Dam break\label{fig:swe:spacetime:uniform:dambreak}]{\includegraphics{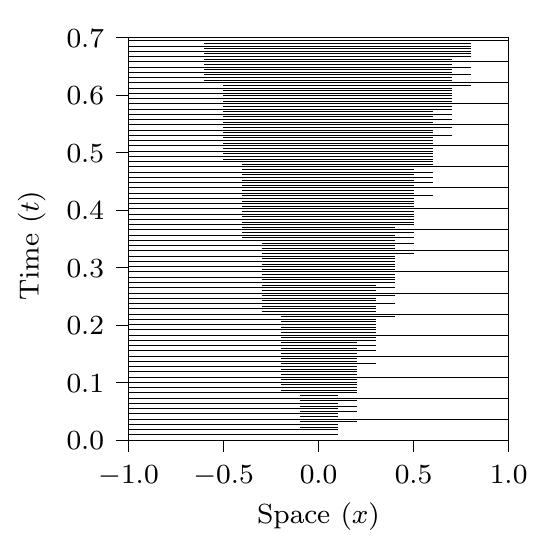}}
  \hfill
  \subfloat[Uniform-Carrier-Greenspan\label{fig:swe:spacetime:uniform:cg}]{\includegraphics{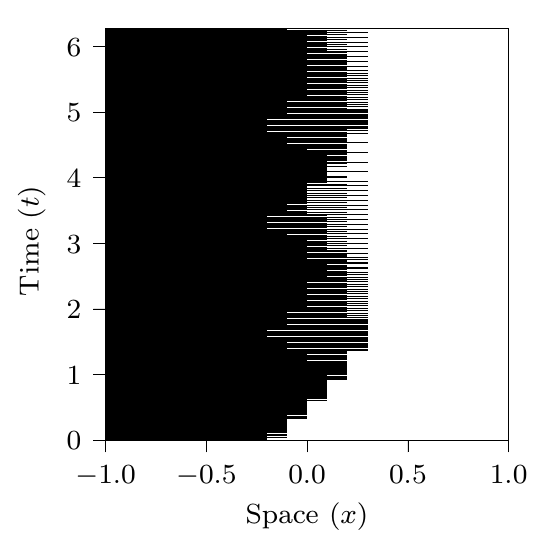}}
  \\
  \subfloat[Polynomial-Dam break\label{fig:swe:spacetime:polynomial:dambreak}]{\includegraphics{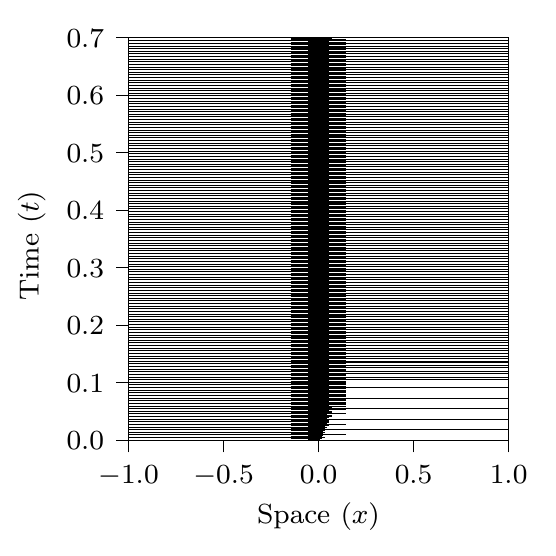}}
  \hfill
  \subfloat[Polynomial-Carrier-Greenspan\label{fig:swe:spacetime:polynomial:cg}]{\includegraphics{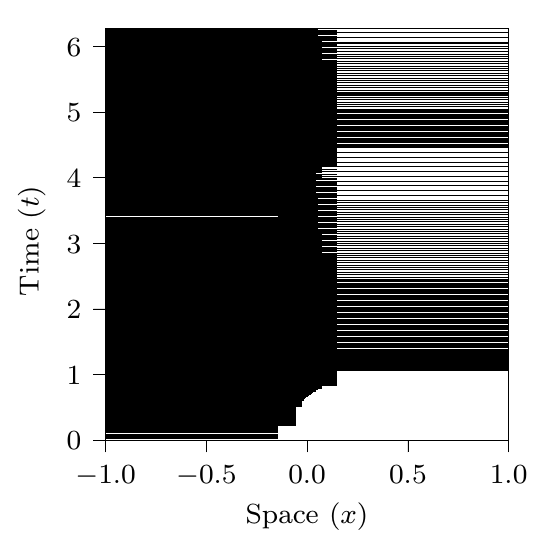}}
  \caption{Space-time plots for the shallow water equations with various meshes and problem configurations.}
  \label{fig:swe:spacetime}
\end{figure}

\subsection{Performance Comparison}
\label{sec:performance-results}

In this section, we compare the performance of our local timestepping implementation to a flat MPI implementation. For the MPI implementation, we use non-blocking point to point messaging and hide message latencies with internal work. One key detail is that the MPI implementation does not compute the CFL condition, but rather uses a fixed timestep. Since dynamically updating a CFL condition would require an all-reduce at each timestep, the communication overhead makes a synchronous adaptive CFL condition non-viable for large-scale simulation. The timestep $\Delta t_{\min}$ is set to  $0.5 (\Lambda/\Delta_x)_{\max}$ based on an analytic expression of the solution. For the devastator runs, we halve the MPI timestep to demonstrate that suboptimal $\Delta t_{\min}$ does not degrade performance.
For the performance comparison, we consider the uniform and polynomial mesh with 500,000 cells on one Skylake node with 48 cores on TACC's Stampede2. We partition the mesh into 48 (one per core) uniform submeshes for the MPI implementation, and 288 (six per core) submeshes for the Devastator implementation.

To analyze the impact of the dynamic CFL condition versus  local timestepping due to mesh refinement, we consider two problems for the shallow water equations. Firstly, we consider a lake at rest problem, where
\begin{align*}
h_0(x) &= 1,\\
q_{x,0}(x) &= 0,\\
z &= 0.
\end{align*}
For this problem, differences in the CFL condition are solely a function of differences in cell size, $\Delta x$. The second problem we consider is the Carrier-Greenspan solution, outlined in the previous section. 
For the lake at rest problem, the work in each submesh is approximately the same. Thus, we assign submeshes $[6\cdot k, 6 \cdot(k+1))$ to rank $k$. For the Carrier-Greenspan problem, we load balance using the heuristic outlined in Section~\ref{sec:load-balancing}. The performance data is shown in Table~\ref{tab:performance-data}. For the static uniform Lake at Rest problem, we include a Devastator configuration---labeled as (no CFL)---which skips computing the local CFL in order to provide a more direct comparison with the MPI version.  Each configuration is run 10 times with the average time elapsed, $\bar{T}$ reported in seconds. The standard deviation over the mean $\sigma_T/\bar{T}$ is given in percentages.


\begin{table}
{\footnotesize
\caption{Execution times for the shallow water equations on Stampede2.}
\label{tab:performance-data}
  \centering

\begin{tabular}{|c|c|c|c|c|c|c|c|c|} \hline
Problem & Mesh & Runtime & $\bar{T}$ & $\sigma_T/\bar{T}$ & $\#$ of Updates & $\#$ of Rollbacks\\\hline
Lake at Rest & Uniform & Devastator (no CFL) &  139.4 & 0.8\% & $252.5\cdot 10^9$ & 0 \\\hline
Lake at Rest & Uniform & Devastator &  173.2 & 0.8\% & $252.5\cdot 10^9$ & 0 \\\hline
Lake at Rest & Uniform & MPI &  133.2 & 0.1\% & $252.5\cdot 10^9$ & -- \\\hline
Lake at Rest & Polynomial & Devastator &  192.6 & 3.3\% & $239.7\cdot 10^9$ & $  3.5\cdot 10^9$ \\\hline
Lake at Rest & Polynomial & MPI &  478.4 & 2.0\% & $892.2\cdot 10^9$ & -- \\\hline
Carrier-Greenspan & Uniform & Devastator &  729.4 & 1.1\% & $856.2\cdot 10^9$ & $ 12.0\cdot 10^9$ \\\hline
Carrier-Greenspan & Uniform & MPI &  995.7 & 0.7\% & $1674.5\cdot 10^9$ & -- \\\hline
Carrier-Greenspan & Polynomial & Devastator & 3212.4 & 0.5\% & $3209.6\cdot 10^9$ & $113.3\cdot 10^9$ \\\hline
Carrier-Greenspan & Polynomial & MPI & 10732.0 & 0.1\% & $18070.1\cdot 10^9$ & -- \\\hline
\end{tabular}
}
\end{table}

\begin{table}
{\footnotesize
\caption{Theoretical versus observed speed-ups.}
\label{tab:speed-ups}
  \centering

\begin{tabular}{|c|c|c|c|c|} \hline
Problem & Mesh & $S^{th}$ & $S^{work}$ & $S^{obs}$ \\\hline
Lake at Rest & Uniform & 1.00 & 1.00 & 0.77 \\\hline
Lake at Rest & Polynomial & 3.73 & 3.72 & 2.48 \\\hline
Carrier-Greenspan & Uniform & 2.02 & 1.96 & 1.37 \\\hline
Carrier-Greenspan & Polynomial & 6.59 & 5.63 & 3.34 \\\hline
\end{tabular}
}
\end{table}
The speed-up achieved via local timestepping is attained through work reduction. 
To assess the quality of our implementation, we contextualize execution times with configuration-dependent metrics to determine what fraction of unnecessary work was skipped. 
Recall the theoretical speed-up \eqref{eq:Sth}, which estimates the maximum achievable speed-up based on analytic values of $\Lambda$ and $\Delta x$. We compare this theoretical estimate against a speed-up based on the number of cells updated.
 Let $W^{obs}_{MPI}$ and $W^{obs}_{deva}$ be the number of cells updated during the simulation with respective runtimes, and define the work-based speed-up as $S^{work}$ the ratio of observed updates, i.e.
\begin{equation*}
S^{work} = \frac{W^{obs}_{MPI}}{W^{obs}_{deva}}.
\end{equation*}
We introduce the observed speed-up as the ratio of run times,
\begin{equation*}
S^{obs} = \frac{ \bar{T}_{MPI} }{\bar{T}_{deva}},
\end{equation*} 
where $\bar{T}_*$ corresponds to the mean observed execution time for either runtime. 
Table \ref{tab:speed-ups} presents the three speed-up values for each configuration. The discrepancies between $S^{th}$ and $S^{work}$ are less than 3\% with the exception of the Carrier-Greenspan polynomial configuration where $S^{th}$ is 17\% larger than $S^{work}$. We suspect these discrepancies result due to difficulties in water draining out of dry regions akin to the updates seen in Figures~\ref{fig:swe:spacetime:uniform:cg} and \ref{fig:swe:spacetime:polynomial:cg}.

 The observed speed-up ranges from $\minSObsOverWork\%$ to $\maxSObsOverWork\%$ of the work-based speed-up, $S^{work}$. 
 Since there is no theoretical benefit from using adaptive, local timestepping in the lake at rest on a uniform mesh configuration, this case highlights the overhead associated with using an adaptive, local timestepping scheme versus a static synchronous timestepping scheme. If we skip the CFL computation and use the analytic value of $K^{\intr}$ (see the first row of Table~\ref{tab:performance-data}), the observed speed-up is  $S^{obs}=\SobsNoCFL$. 

\begin{figure}
\centering
\includegraphics{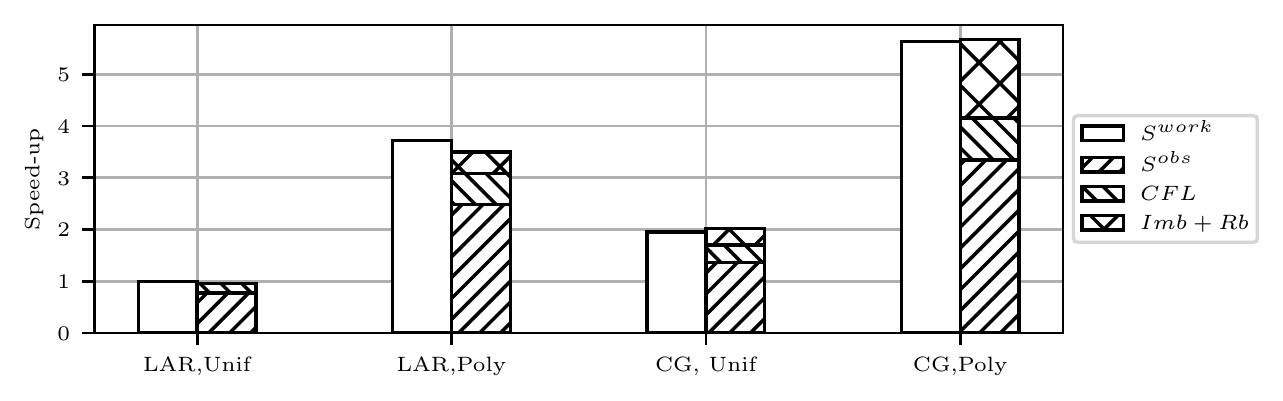}\vspace{-5mm}
\caption{Comparison of speed-up due to work  reduction $S^{work}$ against observed speed-up $S^{obs}$ and the impact of avoiding CFL computations $(CFL)$ and fixing load imbalance and rollbacks $(Imb + Rb)$ for the lake at rest (LAR) and Carrier-Greenspan (CG) problem on the uniform (Unif) and polynomial meshes (Poly).}
\label{fig:imbalance-bars}
\end{figure}

To distinguish between algorithmic overhead of computing the CFL condition and other performance issues, we estimate the cost of the CFL condition by multiplying the MPI execution times by the ratio of the execution times between the CFL and the no-CFL configurations of the lake at rest problem on the uniform mesh using the Devastator runtime. 
Furthermore, the impact of load imbalance and rollbacks is approximated by considering the average imbalance during the simulation. The imbalance is computed as the ratio of cells updated (both rolled back and committed) on the most overworked rank divided by the average number of committed cell updates across all ranks. Over a sufficiently small time interval this imbalance ratio approximates the improvement obtained through perfect load balancing. We estimate the impact of load imbalance by considering average imbalance over 100 evenly sized time intervals.

The cumulative performance impacts of the CFL computation and load imbalance are shown in Figure~\ref{fig:imbalance-bars}. This graph compares the speed-up due to work reduction, $S^{work}$ against the observed speed-up $S^{obs}$ and illustrates how the CFL computation and load imbalance account for the discrepancy. It is important to note that a given part of the stacked bar graph multiplies factors below it, e.g. the top of the $Imb+Rb$ bar corresponds to the expected speed-up if the simulation were perfectly balanced {\em and} there was no cost associated with the CFL condition. 
With these two factors, we are able to account for the discrepancy in $S^{work}$ and $S^{obs}$ speed-ups within $\maxSpeedupErr\%$.

We also observed that rollback had a limited impact on performance for our experimental configurations. For the Carrier-Greenspan problem, rollback minimally impacted the imbalance of the simulation with the imbalance metric increasing by no more than \imbCGdtRb{} for either mesh. For the polynomial lake at rest problem, the imbalance of committed updates was only \polynomialImb . However, once rollbacks are taken into consideration the imbalance went up to \polynomialImbwRb . Rollbacks are observed near regions of large CFL variation. For the Carrier-Greenspan problem, the lack of local clustering in the mapping of submeshes to ranks distributed rollbacks across ranks, and the impact of rollbacks on imbalance is relatively limited. However, for the polynomial lake at rest problem, each rank is assigned a contiguous chunk of submeshes, and rollback occurs on a few ranks.

Another cause of the performance degradation due to imbalance may be limitations of static load balancing. Discrepancies between the performance model  and observed work done may lead to load imbalances, which we are not accounting for. Furthermore, the lower bound for the imbalance of the Carrier-Greenspan problem at the end of the Gurobi partitioning was 20\%, suggesting there are underlying limits to how well the partitioner is able to statically load balance the problem. 

Lastly, we note that while the overhead associated with computing the CFL condition may seem high, it is worth noting that the MPI implementation is not provably stable. For the results presented here, we are able to take optimal timestep sizes, because we are able to analytically determine $(\Lambda / \delta x)_{\max}$. In practice, this value is unobtainable, and the selected $\Delta t$ would lead to sub-optimal timestepping, i.e. taking timesteps smaller than necessary. The local timestepping algorithm can set an arbitrarily small $\Delta t_{\min}$,  and then take appropriately sized timesteps with step-size reductions taken as needed to guarantee stability.

\section{Conclusion}

This work presented an adaptive local timestepping algorithm for conservation laws. Loop invariants were used to derive a proof of formal correctness, ensuring that the algorithm is total variation stable even when considering dynamic changes in local wave speeds. Furthermore, the algorithm was parallelized using a speculative parallel discrete event simulator. Results indicate that the parallelization recovers a majority of the expected speed-up, when accounting for the cost of the CFL condition.
As part of our future work, we will examine high-order multi-dimensional timestepping strategies  in the spirit of~\cite{Constantinescu2007}. The forward Euler step forms the basis of these methods methods. Furthermore, we aim to apply this method to physically relevant coastal modeling problems to better quantify benefit of adaptive local timestepping to problems of interest.



{\section*{Acknowledgments}

This work was supported by the NSF under Grants 1339801 and 1854986, the U.S. DOE through the Computational Science Graduate Fellowship, Grant DE-FG02-97ER25308, and the University of Texas at Austin's Harrington Fellowship.
Additionally, the authors would like to acknowledge the Texas Advanced Computing Center (TACC) at the University of Texas at Austin for providing the HPC resources that enabled this research. The presented results were obtained through XSEDE allocation TG-DMS080016N.

This manuscript has been authored by an author at Lawrence Berkeley National Laboratory under Contract No. DE-AC02-05CH11231 with the U.S. DOE. The U.S. Government retains, and the publisher, by accepting the article for publication, acknowledges, that the U.S. Government retains a non-exclusive, paid-up, irrevocable, world-wide license to publish or reproduce the published form of this manuscript, or allow others to do so, for U.S. Government purposes.
}
\bibliographystyle{siamplain}
\bibliography{BremerBib/Timestepping,BremerBib/HPC,BremerBib/DG,BremerBib/SWE,BremerBib/DES,BremerBib/LoadBalancing}

\begin{thebibliography}{10}

\bibitem{Audusse2004}
{\sc E.~Audusse, F.~Bouchut, M.-O. Bristeau, R.~Klein, and B.~Perthame}, {\em A
  fast and stable well-balanced scheme with hydrostatic reconstruction for
  shallow water flows}, SIAM J. Sci. Comput., 25 (2004), pp.~2050--2065.

\bibitem{Bientinesi2011}
{\sc P.~Bientinesi and R.~A. van~de Geijn}, {\em Goal-oriented and modular
  stability analysis}, SIAM J. Matrix Anal. Appl., 32 (2011), pp.~286--308.

\bibitem{Bokhove2005}
{\sc O.~Bokhove}, {\em Flooding and drying in discontinuous {G}alerkin
  finite-element discretizations of shallow-water equations. {P}art 1: {O}ne
  dimension}, J. Sci. Comput., 22 (2005), pp.~47--82.

\bibitem{Bremer2019}
{\sc M.~Bremer, K.~Kazhyken, H.~Kaiser, C.~Michoski, and C.~Dawson}, {\em
  Performance comparison of {HPX} versus traditional parallelization strategies
  for the discontinuous {G}alerkin method}, J. Sci. Comput., 80 (2019),
  pp.~878--902.

\bibitem{Breuer2016}
{\sc A.~Breuer, A.~Heinecke, and M.~Bader}, {\em Petascale local time stepping
  for the {ADER-DG} finite element method}, in 2016 IEEE International Parallel
  and Distributed Processing Symposium (IPDPS), May 2016, pp.~854--863.

\bibitem{Carrier1958}
{\sc G.~Carrier and H.~Greenspan}, {\em Water waves of finite amplitude on a
  sloping beach}, J. Fluid Mech., 4 (1958), pp.~97--109.

\bibitem{Chan2018}
{\sc C.~{Chan}, B.~{Wang}, J.~{Bachan}, and J.~{Macfarlane}}, {\em Mobiliti:
  Scalable transportation simulation using high-performance parallel
  computing}, in 2018 21st International Conference on Intelligent
  Transportation Systems, IEEE Press, Nov 2018.

\bibitem{Constantinescu2007}
{\sc E.~M. Constantinescu and A.~Sandu}, {\em Multirate timestepping methods
  for hyperbolic conservation laws}, J. Sci. Comput., 33 (2007), pp.~239--278.

\bibitem{Dawson2000}
{\sc C.~Dawson and R.~Kirby}, {\em High resolution schemes for conservation
  laws with locally varying time steps}, SIAM J. Sci. Comput., 22 (2000),
  pp.~2256--2281.

\bibitem{Dawson2011}
{\sc C.~Dawson, E.~J. Kubatko, J.~J. Westerink, C.~Trahan, C.~Mirabito,
  C.~Michoski, and N.~Panda}, {\em Discontinuous {G}alerkin methods for
  modeling hurricane storm surge}, Advances in Water Resources, 34 (2011),
  pp.~1165--1176.

\bibitem{Dumbser2007}
{\sc M.~Dumbser, M.~K\"{a}ser, and E.~F. Toro}, {\em An arbitrary high-order
  {Discontinuous Galerkin} method for elastic waves on unstructured meshes -
  {V.} local time stepping and $p$-adaptivity}, Geophys. J. Int., 171 (2007),
  pp.~695--717.

\bibitem{Fujimoto1990}
{\sc R.~M. Fujimoto}, {\em Parallel discrete event simulation}, Communications
  of the ACM, 33 (1990), pp.~30--53.

\bibitem{Gnedin2018}
{\sc N.~Y. Gnedin, V.~A. Semenov, and A.~V. Kravtsov}, {\em Enforcing the
  courant–friedrichs–lewy condition in explicitly conservative local time
  stepping schemes}, J. Comput. Phys., 359 (2018), pp.~93 -- 105.

\bibitem{Gupta2016}
{\sc S.~Gupta, B.~Wohlmuth, and R.~Helmig}, {\em Multi-rate time stepping
  schemes for hydro-geomechanical model for subsurface methane hydrate
  reservoirs}, Advances in Water Resources, 91 (2016), pp.~78 -- 87.

\bibitem{Gurobi}
{\sc L.~Gurobi~Optimization}, {\em Gurobi optimizer reference manual}, 2020,
  \url{http://www.gurobi.com}.

\bibitem{Harten1983}
{\sc A.~Harten}, {\em High resolution schemes for hyperbolic conservation
  laws}, J. Comput. Phys., 49 (1983), pp.~357--393.

\bibitem{Hoang2019}
{\sc T.-T.-P. Hoang, W.~Leng, L.~Ju, Z.~Wang, and K.~Pieper}, {\em Conservative
  explicit local time-stepping schemes for the shallow water equations}, J.
  Comput. Phys., 382 (2019), pp.~152 -- 176.

\bibitem{Jefferson1985}
{\sc D.~R. Jefferson}, {\em Virtual time}, ACM Transactions on Programming
  Languages and Systems (TOPLAS), 7 (1985), pp.~404--425.

\bibitem{Kirby2003}
{\sc R.~Kirby}, {\em On the convergence of high resolution methods with
  multiple time scales for hyperbolic conservation laws}, Math. Comp., 72
  (2003), pp.~1239--1250.

\bibitem{Krivodonova2010}
{\sc L.~Krivodonova}, {\em An efficient local time-stepping scheme for solution
  of nonlinear conservation laws}, J. Comput. Phys., 229 (2010), pp.~8537 --
  8551.

\bibitem{LeVeque1992}
{\sc R.~J. LeVeque}, {\em Numerical Methods for Conservation Laws}, Birk\"auser
  Verlag, 1992.

\bibitem{Loercher2007}
{\sc F.~L{\"o}rcher, G.~Gassner, and C.-D. Munz}, {\em A discontinuous
  {G}alerkin scheme based on a space--time expansion. {I.} {I}nviscid
  compressible flow in one space dimension}, J. Sci. Comput., 32 (2007),
  pp.~175--199.

\bibitem{Myers2018}
{\sc M.~Myers and R.~van~de Geijn}, {\em {LAFF-ON Programming for
  Correctness}}, 2018.
\newblock URL: \url{http://www.cs.utexas.edu/users/rvdg/pubs/LAFFPfC.pdf}. Last
  visited on 2020/02/25.

\bibitem{Omelchenko2006}
{\sc Y.~Omelchenko and H.~Karimabadi}, {\em Event-driven, hybrid
  particle-in-cell simulation: {A} new paradigm for multi-scale plasma
  modeling}, J. Comput. Phys., 216 (2006), pp.~153 -- 178.

\bibitem{Omelchenko2012}
{\sc Y.~Omelchenko and H.~Karimabadi}, {\em {HYPERS}: {A} unidimensional
  asynchronous framework for multiscale hybrid simulations}, J. Comput. Phys.,
  231 (2012), pp.~1766 -- 1780.

\bibitem{Omelchenko2014}
{\sc Y.~A. Omelchenko and H.~Karimabadi}, {\em Parallel asynchronous hybrid
  simulations of strongly inhomogeneous plasmas}, in Proceedings of the 2014
  Winter Simulation Conference, IEEE Press, 2014.

\bibitem{Osher1983}
{\sc S.~Osher and R.~Sanders}, {\em Numerical approximations to nonlinear
  conservation laws with locally varying time and space grids}, Math. Comp.,
  (1983), pp.~321--336.

\bibitem{Rietmann2015}
{\sc M.~Rietmann, D.~Peter, O.~Schenk, B.~Uçar, and M.~Grote}, {\em
  Load-balanced local time stepping for large-scale wave propagation}, in 2015
  IEEE International Parallel and Distributed Processing Symposium, IEEE Press,
  May 2015.

\bibitem{Schlegel2009}
{\sc M.~Schlegel, O.~Knoth, M.~Arnold, and R.~Wolke}, {\em Multirate
  runge–kutta schemes for advection equations}, J. Comput. Appl. Math., 226
  (2009), pp.~345 -- 357.

\bibitem{Seny2014}
{\sc B.~Seny, J.~Lambrechts, T.~Toulorge, V.~Legat, and J.-F. Remacle}, {\em An
  efficient parallel implementation of explicit multirate {R}unge-{K}utta
  schemes for discontinuous {G}alerkin computations}, J. Comput. Phys., 256
  (2014), pp.~135--160.

\bibitem{Shao2019}
{\sc Q.~Shao, S.~Matthäi, and L.~Gross}, {\em Efficient modelling of solute
  transport in heterogeneous media with discrete event simulation}, J. Comput.
  Phys., 384 (2019), pp.~134 -- 150.

\bibitem{Stone2017}
{\sc D.~Stone, S.~Geiger, and G.~Lord}, {\em Asynchronous discrete event
  schemes for {PDEs}}, J. Comput. Phys., 342 (2017), pp.~161 -- 176.

\bibitem{Taube2009}
{\sc A.~Taube, M.~Dumbser, C.~Munz, and R.~Schneider}, {\em A high‐order
  discontinuous {G}alerkin method with time‐accurate local time stepping for
  the {M}axwell equations}, Int. J. of Numer. Model. Electron. Netw. Devices
  Fields, 22 (2009), pp.~77--103.

\bibitem{Trahan2012}
{\sc C.~J. Trahan and C.~Dawson}, {\em Local time-stepping in {R}unge-{K}utta
  discontinuous {G}alerkin finite element methods applied to the shallow-water
  equations}, Comput. Methods Appl. Mech. Engrg., 217-220 (2012), pp.~139 --
  152.

\bibitem{Unfer2007}
{\sc T.~Unfer, J.-P. Boeuf, F.~Rogier, and F.~Thivet}, {\em An asynchronous
  scheme with local time stepping for multi-scale transport problems:
  {A}pplication to gas discharges}, J. Comput. Phys., 227 (2007), pp.~898 --
  918.

\bibitem{Uphoff2017}
{\sc C.~Uphoff, S.~Rettenberger, M.~Bader, E.~H. Madden, T.~Ulrich,
  S.~Wollherr, and A.-A. Gabriel}, {\em Extreme scale multi-physics simulations
  of the tsunamigenic 2004 {S}umatra megathrust earthquake}, in Proceedings of
  the International Conference for High Performance Computing, Networking,
  Storage and Analysis, ACM, Nov 2017.

\end{thebibliography}

\end{document}